\newtheorem{thm}{Theorem}[section]
\newtheorem{lem}[thm]{Lemma}
\theoremstyle{definition}
\newcommand{\scr}[1]{\mathscr #1}
\definecolor{wco}{rgb}{0.5,0.2,0.3}
\numberwithin{equation}{section} \theoremstyle{remark}
\newtheorem{rem}{Remark}[section]
\newcommand{\ua}{\uparrow}
\title{{\bf Functional Inequalities for Stable-Like Dirichlet Forms}\footnote{Supported in
 part by  Lab. Math. Com. Sys., NNSFC(11131003 and 11201073), SRFDP, the Fundamental Research Funds for the Central Universities and the Program for Excellent Young Talents and for New Century Excellent Talents in Universities
of Fujian (No.\ JA11051 and JA12053).} }
\author{
{\bf   Feng-Yu Wang$^{a), c)}$  and Jian Wang$^{b)}$
}
\\
\footnotesize{$^{a)}$School of Mathematical Sciences, Beijing Normal
University, Beijing 100875, China}\\
\footnotesize{$^{b)}$School of Mathematics and Computer Science, Fujian Normal University, Fuzhou 350007, China}\\
 \footnotesize{$^{c)}$Department of Mathematics,
Swansea University, Singleton Park, SA2 8PP, United Kingdom}\\ \footnotesize{wangfy@bnu.edu.cn, F.-Y.Wang@swansea.ac.uk, jianwang@fjnu.edu.cn}}
\begin{document}
\allowdisplaybreaks
\def\R{\mathbb R}  \def\ff{\frac} \def\ss{\sqrt} \def\B{\mathbf
B}
\def\N{\mathbb N} \def\kk{\kappa} \def\m{{\bf m}}
\def\dd{\delta} \def\DD{\Delta} \def\vv{\varepsilon} \def\rr{\rho}
\def\<{\langle} \def\>{\rangle} \def\GG{\Gamma} \def\gg{\gamma}
  \def\nn{\nabla} \def\pp{\partial} \def\E{\scr E}
\def\d{\text{\rm{d}}} \def\bb{\beta} \def\aa{\alpha} \def\D{\scr D}
  \def\si{\sigma} \def\ess{\text{\rm{ess}}}
\def\beg{\begin} \def\beq{\begin{equation}}  \def\F{\scr F}
\def\Ric{\text{\rm{Ric}}} \def\Hess{\text{\rm{Hess}}}
\def\e{\text{\rm{e}}} \def\ua{\underline a} \def\OO{\Omega}  \def\oo{\omega}
 \def\tt{\tilde} \def\Ric{\text{\rm{Ric}}}
\def\cut{\text{\rm{cut}}} \def\P{\mathbb P} \def\ifn{I_n(f^{\bigotimes n})}
\def\C{\scr C}      \def\aaa{\mathbf{r}}     \def\r{r}
\def\gap{\text{\rm{gap}}} \def\prr{\pi_{{\bf m},\varrho}}  \def\r{\mathbf r}
\def\Z{\mathbb Z} \def\vrr{\varrho} \def\ll{\lambda}
\def\L{\scr L}\def\Tt{\tt} \def\TT{\tt}\def\II{\mathbb I}
\def\i{{\rm in}}\def\Sect{{\rm Sect}}  \def\H{\mathbb H}
\def\M{\scr M}\def\Q{\mathbb Q} \def\texto{\text{o}} \def\LL{\Lambda}
\def\Rank{{\rm Rank}} \def\B{\scr B} \def\i{{\rm i}} \def\HR{\hat{\R}^d}
\def\to{\rightarrow}\def\l{\ell}
\def\8{\infty}\def\I{1}

\maketitle

\begin{abstract}  Let $V\in C^2(\R^d)$ such that $\mu_V(\d x):= \e^{-V(x)}\,\d x$ is a probability measure, and let $\aa\in (0,2)$. Explicit criteria are presented for the $\aa$-stable-like Dirichlet form
$$\E_{\aa,V}(f,f):= \iint_{\R^d\times\R^d} \ff{|f(x)-f(y)|^2}{|x-y|^{d+\alpha}}\,\d y\,\e^{-V(x)}\,\d x$$ to satisfy Poincar\'e-type (i.e.,  Poincar\'e, weak Poincar\'e and super Poincar\'e) inequalities. As applications, sharp functional inequalities are derived for the Dirichlet form with $V$ having some typical growths.
  Finally, the main  result  of \cite{MRS} on the Poincar\'e inequality is strengthened. \end{abstract} \noindent
 AMS subject Classification:\  60J75, 47G20, 60G52.   \\
\noindent
 Keywords: Functional inequalities, stable-like Dirichlet forms,  Lyapunov type conditions, subordination.
 \vskip 2cm

\section{Introduction}

Functional inequalities are powerful and efficient tools to analyze
Markov semigroups and their generators, see e.g.\ \cite{WBook} for a
general theory of functional inequalities and applications. In
particular, the Nash/Sobolev inequalities are corresponding to
uniform heat kernel upper bounds of the semigroup, the log-Sobolev
inequality is equivalent to Nelson's hypercontractivity (\cite{N}) of
the semigroup, the super log-Sobolev inequality (also called the
log-Sobolev inequality with parameter) is equivalent to the
supercontractivity and in some cases implies the ultracontractivity
of the semigroup,  the Poincar\'e inequality is equivalent to the
exponential convergence of the semigroup, and the weak Poincar\'e
inequality characterizes various convergence rates of the semigroup
slower than exponential,  see e.g.\ \cite{Davies, Gross, DS, RW01,
W03} for details. As a general version of functional inequalities
stronger than the Poincar\'e one, the super Poincar\'e inequality is
equivalent to the uniform integrability   of the semigroup, and
also the absence of the essential spectrum of the generator if the
semigroup has an asymptotic density, see \cite{W00a, W00b, GW, W04}
for details.

To establish functional inequalities, many explicit criteria have
been proved for diffusion processes and Markov chains, but rare is
known for L\'evy type jump processes. Of course, using subordination
techniques, functional inequalities  for a class of jump processes
can be deduced from known ones of diffusion processes, see \cite{BM,
W07, SW11, GM} and \cite[Chapter 12.3]{SSV} (in an abstract setting)
for details.  However, in general it is difficult (and impossible in
many cases) to identify a L\'evy type jump process as subordination
of a diffusion process. So, it is necessary to provide general
criteria to verify functional inequalities for L\'evy type jump
processes. We remark that using harmonic analysis technique, a
sufficient condition for the Poincar\'e inequality to hold,  see
(\ref{EF}) below,  has been presented in \cite{MRS}. As pointed out
after Corollary \ref{C1.5} below,  this condition excludes many
typical examples which possess the even stronger super Poincar\'e
inequality.   The purpose of this paper is to find out sharp and
easy to check sufficient conditions for general functional
inequalities of stable-like jump processes.

To make the paper easy to follow, let us start with a simple example,
 i.e. the Ornstein-Uhlenbeck process driven by the $\aa$-stable process.
 Let $\DD$ be the Laplacian on $\R^d$. Consider the Ornstein-Uhlenbeck operator
$$A_\aa f(x):= -(-\DD)^{\aa/2}f(x) -\<x, \nn f (x)\> ,\ \ \ f\in C_0^\infty(\R^d)$$
for $\aa\in (0,2).$ Then the associated Markov semigroup has a
unique invariant (but not reversible, see \cite{ABW}) probability
measure $\mu_\aa$, which is identified by the Fourier transformation
$$\hat\mu_\aa(\xi):=  \int_{\R^d} \e^{i \<x,\xi\>}\,\mu_\alpha(\d x)
=\e^{-\frac{1}{\alpha}|\xi|^\alpha},\quad \xi\in\R^d.
$$
For any $f\in C_0^\infty(\R^d)$, the set of all smooth functions on
$\R^d$ with compact support, we have ({see \cite[Proposition
4.1]{LR} or \cite[(1.9)]{RW}}) \beq\label{DA}\E_\aa(f,f):=
-\int_{\R^d} f A_\aa  f\d\mu_\aa =\ff 1 2 \iint_{\R^d\times \R^d}
\ff{|f(x)-f(y)|^2}{|x-y|^{d+\aa}}\,\d y\mu_\aa(\d
x).\end{equation}Let $\D(\E_\aa)=\{f\in L^2(\mu_\aa):\
\E_\aa(f,f)<\infty\}.$ According to \cite[Example 3.2(2)]{RW}, the
semigroup $P_t^\aa$ generated by $A_\aa$ is not hyperbounded, i.e.
$\|P_t^\aa\|_{L^p(\mu_\aa)\to L^q(\mu_\aa)}=\infty$ for any $t\ge 0$
and $q>p\ge 1.$ Therefore, the log-Sobolev inequality of $\E_\aa$
does not hold. In fact, since
\beq\label{AP}\frac{1}{c(1+|x|^2)^{(d+\alpha)/2}}\,\d x\le
\mu_\alpha(\d x)\le \frac{c}{(1+|x|^2)^{(d+\alpha)/2}}\,\d
x\end{equation}  holds for some constant $c>1$,  see e.g.
\cite[Theorem 2.1]{BG} or \cite[(1.5)]{CZQ},  Corollary
\ref{C1.2}(2) below provides  a stronger statement, i.e. the super
Poincar\'e inequality is  not available neither. Recall that the
log-Sobolev inequality
$$\mu_\aa(f^2\log f^2) \le C \E_\aa(f,f),\ \ f\in \D(\E_\aa), \mu_\aa(f^2)=1$$
holds for some constant $C>0$ if and only if the super Poincar\'e
inequality
$$\mu_\aa(f^2)\le r\E_\aa(f,f) +\exp\Big(c\big(1+r^{-1}\big)\Big)\mu_\aa(|f|)^2,\ \ r>0, f\in \D(\E_\aa)$$
holds for some constant $c>0$. On the other hand,  Corollary \ref{C1.2}(1) implies that the Poincar\'e inequality
$$\mu_\aa(f^2)\le C\E_\aa(f,f),\ \ f\in \D(\E_\aa), \mu_\aa(f)=0$$ holds for some constant $C>0$, which has been open for a long time. Therefore, for this typical example, the best possibility among functional inequalities mentioned above is the Poincar\'e inequality.

Now, as a generalization of (\ref{DA}), we consider
\beg{equation*}\beg{split} &\E_{\aa,V}(f,g):= \iint_{\R^d\times\R^d} \ff{(f(x)-f(y))(g(x)-g(y))}{|x-y|^{d+\aa}}\,\d y\mu_V(\d x), \\
 &\D(\E_{\aa,V}):= \Big\{f\in L^2(\mu_V):\ \E_{\aa, V}(f,f)<\infty\Big\},\end{split}\end{equation*}  where $V$ is a measurable function on $\R^d$ such that
 $$\mu_V(\d x):= \ff 1 {\int_{\R^d} \e^{-V(x)}\d x} \,\e^{-V(x)}\d x$$ is a probability measure.
 Then $(\E_{\aa,V}, \D(\E_{\aa, V}))$ is a symmetric Dirichlet form on $L^2(\mu_V)$.  Let $P_t^{\aa,V}$ be the associated  Markov semigroup.
 Let
 \beg{equation*}\beg{split} &h(r)= \inf_{|x|\le r} \e^{V(x)},\ \ H(r)= \sup_{|x|\le r} \e^{V(x)},\\
 & \Phi(r)= \inf_{|x|\ge r} \ff{\e^{V(x)}}{(1+|x|)^{d+\aa}},\ \ \Phi^{-1}(r)=\inf\big\{s\ge 0:\ \Phi(s)\ge r\big\},\ \ r>0,\end{split}\end{equation*} where we set $\inf\emptyset =\infty$ by convention. Moreover, let
\beg{equation*}\beg{split} \Psi_1(r)& = \bigg(\sup_{|x|\le r}
\frac{(1+|x|)^{d+\aa}}{\e^{V(x)}} \bigg) \sup_{x\in\R^d}
\ff{\e^{V(x)}}{(1+|x|)^{d+\aa}},\\
\Psi_2(r)&= \ff 1 {\mu_V(B(0,r))^2} \sup_{x\in B(0,r)}\int_{B(0,r)}|y-x|^{d+\aa}\e^{-2V(y)}\d y, \ \ r>0.
\end{split}\end{equation*}
The main result of the paper is the following

\beg{thm}\label{T1.1}  Let $\int_{\R^d}\e^{-V(x)}\d x<\infty$ such that $\mu_V$ is a well defined probability measure.
 \beg{enumerate}
\item[$(1)$] If $\e^{-V}\in C_b^2(\R^d)$ such that
\beq\label{A1} \limsup_{r\to\infty} \Big\{r^{d+\aa-1} \sup_{|x|\ge r-1} |\nn \e^{-V}(x)| + r^{d+\aa-2} \sup_{|x|\ge r-1} \e^{-V(x)}\Big\}=0\end{equation} and $ \Phi(0) >0,$ then the Poincar\'e inequality
\beq\label{P} \mu_V(f^2)\le C \E_{\aa,V}(f,f),\ \ f\in \D(\E_{\aa,V}), \mu_V(f)=0\end{equation} holds for some constant $C>0$.
\item[$(2)$] If  $\e^{-V}\in C_b^2(\R^d)$ such that \eqref{A1} holds and $\Phi(r)\uparrow\infty$ as $r\uparrow\infty,$ then there exist constants $c_1,c_2>0$ such that  the super Poincar\'e inequality
\beq\label{SP} \mu_V(f^2)\le r \E_{\aa,V}(f,f) +\bb(r)\mu_V(|f|)^2,\ \ r>0, f\in \D(\E_{\aa,V})\end{equation} holds for
$$
\beta(r)=c_1\Big(1+ r^{-d/\alpha}\big\{h\circ\Phi^{-1}( c_2 r^{-1})  \big\}^{-1-d/\alpha}
\big\{H\circ\Phi^{-1}(c_2r^{-1}) \big\}^{2+d/\alpha}\Big),\ \ r>0.
$$
\item[$(3)$] There exists  a universal constant $c >0$ such that  the weak Poincar\'e inequality
\beq\label{WP} \mu_V(f^2)\le \tt\bb(r) \E_{\aa,V}(f,f) +r
\|f\|_\infty^2,\ \ r>0, f\in\D(\E_{\aa,V}), \mu_V(f)=0\end{equation}
holds for
$$\tt\bb(r):= \inf \Big\{ \big(c\Psi_1(R)\big)\wedge \Psi_2(R):\  \mu_V(B(0,R)^c)\le \ff r {1+r}\Big\}<\infty,\ \ r>0.$$
\end{enumerate}\end{thm}

Although we assume in Theorem \ref{T1.1}(1)-(2) that $\e^{-V}$ is at
least $C^2$-smooth, the assertions work also for singular case by
using perturbation results of functional inequalities, see
\cite{CWW}. To illustrate this result, below we consider some
typical families of $V$ with different type growths:  for faster
growth of $V$ one derives stronger functional inequality. When we
apply Theorem \ref{T1.1}(3) to derive weak Poincar\'e inequalities
for these families of $V$, the function $\Psi_1$ in the definition
of $\tt\bb$ is better than $\Psi_2$. On the other hand, however,
$\Psi_2$ is always finite but in some cases $\Psi_1$ is infinite.
So, in general these two functions are not comparable.

According to (\ref{AP}), in the following result $\mu_V$ is a natural extension to $\mu_\aa$, i.e. when $\vv=\aa$ a Poincar\'e type inequality for $\E_{\aa,V}$ and $\mu_V$ is equivalent to that for $\E_\aa$ and $\mu_\aa$. In particular, as mentioned above, this result  implies that $\E_\aa$ satisfies the Poincar\'e inequality but not the super Poincar\'e inequality.

\beg{cor}\label{C1.2} Let $V(x)= \ff 1 2 (d+\vv)\log (1+|x|^2),\   \vv>0.$
\beg{enumerate} \item[$(1)$] The Poincar\'e inequality $(\ref{P})$ holds for some   constant $C>0$ if and only if $\vv\ge\aa$.
\item[$(2)$] The super Poincar\'e inequality $(\ref{SP})$ holds for some function $\bb: (0,\infty)\to (0,\infty)$ if and only if $\vv>\aa$, and in this case there exists a constant $c>0$ such that the inequality holds with
$$\bb(r)= c\Big(1+ r^{-\ff d\aa-\ff{(d+\vv)(2\aa+d)}{\aa(\vv-\aa)}}\Big),\ \ \ r>0,$$   and equivalently,
$$\|P_t^{\aa,V}\|_{L^1(\mu_V)\to L^\infty(\mu_V)}\le \ll \Big(1+ t^{-\ff d\aa-\ff{(d+\vv)(2\aa+d)}{\aa(\vv-\aa)}}\Big),\ \ \ r>0$$ holds for some constant $\ll>0.$
\item[$(3)$] If $\vv\in (0,\aa)$, then there exists a constant $c>0$ such that the weak Poincar\'e inequality $(\ref{WP})$ holds for
$$\tt\bb(r)= c\Big(1+ r^{-(\aa-\vv)/\vv}\Big),\ \ r>0.$$ Consequently, there exists a constant $\ll>0$ such that
$$\|P_t^{\aa,V}-\mu_V\|_{L^\infty(\mu_V)\to L^2(\mu_V)}^2\le \ff \ll {t^{\vv/(\aa-\vv)}},\ \ t>0. $$
 This $\tt\bb$ is sharp in the sense that $(\ref{WP})$ does not hold if $\lim_{r\to 0} r^{(\aa-\vv)/\vv}\tt\bb(r)=0.$  \end{enumerate}\end{cor}

Since $\vv=\aa$ in Corollary \ref{C1.2}  is the critical situation for the Poincar\'e inequality, we consider below lower order perturbations of the corresponding $V$.

 \beg{cor}\label{C1.3} Let $V(x)= \ff 1 2 (d+\aa)\log (1+|x|^2) + \vv\log\log(\e+|x|^2),\  \vv\in\R.$
\beg{enumerate}
\item[$(1)$] The super Poincar\'e inequality $(\ref{SP})$ holds for some $\bb$ if and only if $\vv>0$, and in this case it holds with  $$\bb(r)= \exp\Big[c\Big(1+r^{-1/\vv}\Big)\Big]$$ for some constant $c>0$, so that when $\vv>1$,
$$\|P_t^{\aa,V}\|_{L^1(\mu_V)\to L^\infty(\mu_V)}\le \exp\Big[\ll \Big(1+ t^{-1/(\vv-1)}\Big)\Big],\ \ \ t>0$$ holds for some constant $\ll>0.$
\item[$(2)$] The super Poincar\'e inequality in $(1)$ is sharp in the sense that $(\ref{SP})$ does not hold if
$$\lim_{r\to 0}r^{1/\vv}\log\bb(r)=0.$$
\item[$(3)$] The log-Sobolev inequality
\beq\label{LS} \mu_V(f^2\log f^2)\le C\E_{\aa,V}(f,f),\ \ f\in \D(\E_{\aa,V}), \mu_V(f^2)=1\end{equation} holds for some constant $C>0$ if and only if $\vv\ge 1.$
\item[$(4)$] The Poincar\'e inequality $(\ref{P})$  holds for some constant $C>0$ if and only if $\vv\ge 0$, and there exists a universal constant $c>0$ such that for $\vv<0$ the weak Poincar\'e inequality
$(\ref{WP})$ holds with  $$\widetilde{\bb}(r)= c \Big(1+ \log^{-\vv}
\big(1+r^{-1}\big)\Big),\ \ r>0.$$
Consequently, for $\vv<0$ there exist   constants $\ll_1,\ll_2>0$
such that
 $$\|P_t^{\aa,V}-\mu_V\|_{L^\infty(\mu_V)\to L^2(\mu_V)}\le \exp\Big[\ll _1-\ll_2t^{1/(1-\varepsilon)}\Big],\ \ \ t>0.$$
This $\tt\bb$ is sharp in the sense that for $\vv<0$ the weak Poincar\'e inequality $(\ref{WP})$ does not hold if $\lim_{r\to 0} \tt\bb(r)\log^\vv (1+r^{-1})   =0.$ \end{enumerate}
\end{cor}

Below we consider a family of $V$ with slower growth such that $\mu_V$ is a probability measure, for which merely the weak Poincar\'e inequality is available.

  \beg{cor}\label{C1.4} Let $V(x)= \ff d 2 \log (1+|x|^2) + \vv\log\log(\e+|x|^2),\ \ \vv>1.$
Then there exist some constants $c_1,c_2>0$ such that the weak Poincar\'e inequality $(\ref{WP})$ holds with  $$\widetilde{\bb}(r)= c_1\exp\Big[c_2r^{-1/(\vv-1)}\Big].$$ Consequently, there exists some constant $\ll>0$ such that
 $$\|P_t^{\aa,V}-\mu_V\|_{L^\infty(\mu_V)\to L^2(\mu_V)}\le \ll \Big[\log(1+t)\Big]^{1-\vv},\ \ \ t>0.$$
 This $\tt \bb$ is sharp in the sense that the weak Poincar\'e inequality $(\ref{WP})$ does not hold if $\lim_{r\to 0} r^{1/(\vv-1)} \log\tt \bb(r)=0.$
\end{cor}

Finally, we consider two families of  $V$ with stronger growths than
all those presented above, so that the rather stronger super
Poincar\'e inequality is available.

\beg{cor} \label{C1.5} $(1)$  Let  $V(x)=\log^{1+\vv}(1+|x|^2),\ \vv>0.$ Then there exists a constant $c>0$ such that $(\ref{SP})$ holds for
$$\bb(r)= c+c r^{-2(\aa+d)/\aa} \exp\big[c\log ^{1/(1+\vv)} (1+r^{-1})\big],  \ \ r>0.$$ Consequently, there exists a constant $\ll>0$ such that
$$\|P_t^{\aa,V}\|_{L^1(\mu_V)\to L^\infty(\mu_V)}\le \ll + \ll t^{-2(\aa+d)/\aa} \exp\big[\ll\log^{1/(1+\vv)} (1+t^{-1})\big],\ \ t>0.$$

$(2)$ Let $V(x)=   (1+|x|^2)^\vv,\ \ \vv>0.$  Then there exists a constant $c>0$ such that the super Poincar\'e inequality $(\ref{SP})$ holds for  $$\bb(r)=  c\Big(1 + r^{-2(\aa+d)/\aa}\log^{(2\aa+d)(d+\aa)/(2\vv \aa)}(1+r^{-1})\Big),\ \ \ r>0,$$ and consequently,
 $$\|P_t^{\aa,V}\|_{L^1(\mu_V)\to L^\infty(\mu_V)}\le\ll\Big(1  + t^{-2(\aa+d)/\aa}\log^{(2\aa+d)(d+\aa)/(2\vv \aa)}(1+t^{-1})\Big),\ \ \ t>0$$ holds for some constant $\ll>0$.
\end{cor}

\
We remark that the following sufficient condition for $\E_{\aa,V}$ to satisfy the Poincar\'e inequality has been presented in \cite{MRS}: $V\in C^2(\R^d)$ such that \beq\label{EF} \lim_{|x|\to\infty} \big\{\dd |\nn V|^2-\DD V\} =\infty\ \ \text{for\ some\ constant}\ \dd\in (0,1/2).\end{equation} Obviously, this condition does not hold for $V$ in Corollaries \ref{C1.2}-\ref{C1.5}(1). In the situation of Corollary \ref{C1.5}(2), (\ref{EF}) holds if and only if $\vv>\ff 1 2$. In this case, using the argument of \cite{MRS}, we are able to confirm
the super Poincar\'e inequality for (see Theorem \ref{TA} below)
$$\bb(r) = \exp\Big[c\Big(1+r^{-2\vv/(\alpha(2\vv-1))}\Big)\Big],\ \ r>0$$ for some constant $c>0$, which is however much worse than the one given in Corollary \ref{C1.4}(2).
We also mention that sufficient conditions for a (non-symmetric)
$L^2$-generator of L\'{e}vy driven Ornstein-Uhlenbeck processes to
satisfy Poincar\'{e} inequality have been investigated in
\cite[Section 5]{K}, where the proof is based on exact asymptotics
for a distribution density of certain L\'{e}vy functionals; however,
extensions to the present setting are not yet available.

\
The proof of Theorem \ref{T1.1} is based on Lyapunov type conditions considered in \cite{CGWW}. To verify these conditions, we first characterize in Section 2 the infinitesimal generator of $(\E_{\aa,V}, \D(\E_{\aa,V}))$, then present complete proofs of the above results in Section 3 and Section 4. Finally, in Section 5 we present a result on the super Poincar\'e inequality using a weaker version of condition (\ref{EF}) by allowing $\dd$ to approach $1$, such that the main result in \cite{MRS} on the Poincar\'e inequality is strengthened.

\section{The infinitesimal generator of $\E_{\aa,V}$}

We first introduce some facts concerning the Dirichlet form and generator of the $\aa$-stable process. Let
$$ \C_\aa =\big\{f\in C^2(\R^d):\ \|\nn f\|_\infty<\infty, |f|\le C(1+|\cdot|^r)\ \text{holds\ for \ some \ } C>0, r\in (0,\aa)\big\}.$$
For any $f\in \C_\aa$, there exist    constants $C>0$ and $r\in (0,\aa)$ such that
\beg{equation*}\beg{split}|f(x+z)- f(x)-  & \<\nn f ,z\>1_{\{|z|\le 1\}}|\frac{1}{|z|^{d+\alpha}}\\
&\le \ff{ \sup_{B(x,1)}\|\nn^2 f\|} {|z|^{d+\aa-2}}  1_{\{|z|\le
1\}} +
\ff{C(1+|x|^r+|z|^r)}{|z|^{d+\aa}}1_{\{|z|>1\}}.\end{split}\end{equation*}
Then for $f\in \C_\aa$, \beq\label{DAD}-(-\DD)^{\aa/2} f := C_\aa
\int_{\R^d} \Big(f(\cdot+z)-f -\<\nn f ,z\>1_{\{|z|\le 1\}}\Big)
\ff{\d z}{|z|^{d+\aa}} \end{equation} is a well-defined    locally
bounded measurable function, where $C_\aa>0$ is a constant such that
({see \cite[Example 32.7]{Sa}}), \beq\label{II} \ff {2}
{C_\aa}\int_{\R^d} \Big(f\,(-\DD)^{\aa/2} g\Big)(x) \,\d
x=\E_\aa^{(0)}(f,g),\ \ f,g\in C_0^2(\R^d),\end{equation}  where
$$\E_\aa^{(0)}(f,g):= \iint_{\R^d\times\R^d} \ff{(f(x)-f(y))(g(x)-g(y))}{|x-y|^{d+\aa}}\,\d y\,\d x.$$

Next, for $f\in C_0^2(\R^d)$ and $g\in\C_\aa$,  there exist constants $C,R>0$ and $r\in (0,\aa)$ such that
\beg{equation*}\beg{split} |f(x)-f(y)|&\cdot |g(x)-g(y)|\\
&\le C |x-y|^21_{\{|x-y|\le R\}}+ C(|x|^r+|y|^r+1)  1_{(\text{supp}f \times  \text{supp}f)^c}(x,y)1_{\{|x-y|>R\}},\end{split}\end{equation*}   so that
$\E_{\aa}^{(0)}(f,g)\in\R$ is  well-defined.

Moreover, since for any function $g\in \C_\aa$  there exist $\{g_n\}_{n\ge 1}\subset C_0^2(\R^d)$ such that  $ \|\nn g_n\|_\infty \le C, |g_n|\le C(1+|\cdot|^r)$ holds for some constants $C>0$ and $r\in (0,\aa)$, and that  $g_n\to g, \nn g_n\to \nn g$ and $\nn^2 g_n\to \nn^2g$ uniformly on compact sets, (\ref{II}) implies that
\beq\label{INT} \ff {2} {C_\aa}\int_{\R^d} \Big(f\,(-\DD)^{\aa/2} g\Big)(x) \,\d x= \E_\aa^{(0)}(f,g),\ \ f\in C_0^2(\R^d), g\in\C_\aa.\end{equation}

Finally, if $\e^{-V}\in C_b^1(\R^d)$ and $g\in \C_\aa$, then
$$|g(x)-g(y)|\cdot |\e^{-V(x)}-\e^{-V(y)}|\le C |x-y|^21_{\{|x-y|\le 1\}} + \ff{C (1+|x|^r+|y|^r)}{|x-y|^{d+\aa}}1_{\{|x-y|>1\}}$$ holds for some constant $C>0$ and $r\in (0,\aa).$ Therefore, in conclusion,    if $\e^{-V}\in C_b^2(\R^d)$ and $g,    g\e^{-V}\in \C_\aa$, then
\beq\label{LL}\beg{split} L_{\aa,V} g :=&\ff 2 {C_\aa} \Big( g\e^V(-\DD)^{\aa/2}\e^{-V} -\e^V(-\DD)^{\aa/2}(\e^{-V}g)\Big)\\
&-\e^V\int_{\R^d}\ff{(g-g(y))(\e^{-V}-\e^{-V(y)})}{|\cdot-y|^{d+\aa}}\,\d y\end{split}\end{equation} gives rise to a locally bounded measurable function.

\beg{prp}\label{P2.1} Assume that $\e^{-V}\in C_b^2(\R^d)$. For any $f\in C_0^2(\R^d)$ and $g\in \C_\aa$ such that $\e^{-V}g\in\C_\aa$,
$$\E_{\aa,V}(f,g)= -\int_{\R^d}fL_{\aa,V} g \,\d\mu_V.$$
\end{prp}

\beg{proof}  Since $f\e^V, fg\e^V\in C_0^2(\R^d)$ and $\e^{-V}g, \e^{-V}\in\C_\aa$, it follows from (\ref{INT}) that
\begin{align*} -\int_{\R^d} fL_{\aa, V}g\, \d\mu_V
 =& \frac{2}{C_\aa}\int f\e^{V}(-\Delta)^{\alpha/2}(\e^{-V}g)\,\d\mu_V-\ff {2}{C_\aa}\int fg\e^{V}(-\Delta)^{\alpha/2}\e^{-V}\,\d\mu_V\\
&
+\iint_{\R^d\times\R^d} \ff{f(x)(g(x)-g(y))(\e^{-V(x)}-\e^{-V(y)})}{|x-y|^{d+\aa}}\,\d y\,\d x\\
=&\,\, \E_\aa^{(0)} (\e^{-V}g, f) -\E_\aa^{(0)} (fg, \e^{-V})\\
&+\iint_{\R^d\times\R^d} \ff{f(x)(g(x)-g(y))(\e^{-V(x)}-\e^{-V(y)})}{|x-y|^{d+\aa}}\,\d y\,\d x\\
=&\iint_{\R^d\times\R^d} \ff 1 {|x-y|^{d+\aa}} \times\Big\{ \big(\e^{-V(x)}g(x)- \e^{-V(y)}g(y)\big)\big(f(x)-f(y)\big) \\
&\qquad\qquad\qquad\qquad\qquad-\big((fg)(x)-(fg)(y)\big)
\big(\e^{-V(x)}- \e^{-V(y)}\big) \\
&\qquad\qquad\qquad\qquad\qquad+f(x)\big(g(x)-g(y)\big)\big(\e^{-V(x)}-\e^{-V(y)}\big)\Big\}\, \d y\,\d x\\
=&\iint_{\R^d\times \R^d}
\ff{(f(x)-f(y))(g(x)-g(y))}{|x-y|^{d+\aa}}\d y\, \e^{-V(x)}\,\d x\\
 =&\,\,\E_{\aa,V}(f,g).\end{align*} \end{proof}

According to Proposition \ref{P2.1}, the operator $(L_{\aa,V},
C_0^2(\R^d))$ is symmetric on $L^2(\mu_V)$; on the other hand,
  $(\E_{\aa,V}, C_0^\infty(\R^d))$ is closable and it is easy to see that its closure coincides with $(\E_{\aa,V},\D(\E_{\aa,V}))$. Moreover,
 combining (\ref{DAD}) with (\ref{LL}),  we obtain the following result with explicit expression of $L_{\aa,V}$.

 \beg{prp}\label{P2.2}  Assume that $\e^{-V}\in C_b^2(\R^d)$. For any $f\in \C_\aa$ such that $f\e^{-V}\in\C_\aa$, \begin{equation*}\aligned
L_{\aa,V} f(x)
=& \int_{\R^d} \Big(f(x+z)-f(x)-\<\nn f(x),z\>1_{\{|z|\le 1\}}\Big) \ff{1+\e^{V(x)-V(x+z)}}{|z|^{d+\aa}}\,\d z\\
&\quad+ \int_{\{|z|\le 1\}}
\<\nn f(x), z\>\Big(\e^{V(x)-V(x+z)}-1\Big)\frac{\d z}{|z|^{d+\alpha}}.
\endaligned \end{equation*} \end{prp}
\beg{proof} By (\ref{DAD}) we have
\begin{align*}L_{\aa,V,1}f(x):=& \ff {2}{C_\aa} \Big(f(x)\e^{V(x)} (-\DD)^{\aa/2}\e^{-V}(x)-\e^{V(x)}(-\DD)^{\aa/2}(\e^{-V}f)(x)\Big)\\
=& 2\int_{\R^d} \Big(f(x+z)-f(x)-\<\nn f(x),z\>1_{\{|z|\le 1\}}\Big) \ff{\e^{V(x)-V(x+z)}}{|z|^{d+\aa}}\,\d z\\
&\quad+ 2\int_{\{|z|\le 1\}}
\<\nn f(x), z\>\Big(\e^{V(x)-V(x+z)}-1\Big)\frac{\d z}{|z|^{d+\alpha}}.\end{align*}
On the other hand,
\begin{align*}L_{\aa,V,2}f(x):=&\e^{V(x)}\int_{\R^d} \ff{(f(x)-f(y))(\e^{-V(x)}-\e^{-V(y)})}{|x-y|^{d+\aa}}\,\d y\\
=&\int_{\R^d}\Big(f(y)-f(x)\Big)\frac{e^{-V(y)+V(x)}-1}{|y-x|^{d+\alpha}}\,\d y\\
=&\lim_{\varepsilon\to0}\Bigg[\int_{|z|\ge\varepsilon}\Big(f(x+z)-f(x)-\langle\nabla f(x), z\rangle1_{\{|z|\le 1\}}\Big)\frac{\e^{-V(x+z)+V(x)}-1}{|z|^{d+\alpha}}\,\d z\\
&\qquad+\int_{|z|\ge\varepsilon}\langle \nabla f(x), z\rangle1_{\{|z|\le 1\}}\frac{\e^{-V(x+z)+V(x)}-1}{|z|^{d+\alpha}}\,\d z\bigg]\\
=&\int_{\R^d}\Big(f(x+z)-f(x)-\langle\nabla f(x), z\rangle1_{\{|z|\le 1\}}\Big)\frac{\e^{-V(x+z)+V(x)}-1}{|z|^{d+\alpha}}\,\d z\\
&\qquad+\int_{\{|z|\le 1\}}\langle \nabla f(x),
z\rangle\Big(\e^{-V(x+z)+V(x)}-1\Big)\frac{1}{|z|^{d+\alpha}}\,\d
z.\end{align*} Combining both equalities above with \eqref{LL}, we
prove the desired assertion.
\end{proof}

Finally, the following result confirms the Lyapunov condition used in \cite{CGWW} for the study of super Poincar\'e inequalities.

\beg{prp}\label{P2.3} Assume   $\e^{-V}\in C_b^2(\R^d)$ and that $(\ref{A1})$ holds.  Let $\aa_0\in (0,1\land\aa)$ and let $\phi\in C^\infty(\R^d)$ such that  $\phi(x)=1+|x|^{\aa_0}$ for $|x|\ge 1$.  Then $\e^{-V},\phi, \phi\e^{-V}\in\C_\aa$. If moreover
$\Phi(0)>0$, then there exist constants $r_0,C_1,C_2>0$ such that
$$L_{\aa,V}\phi (x) \le -C_1 \Phi(|x|) \phi(x) +C_21_{\{|x|\le r_0\}},\ \ \  x\in\R^d.$$\end{prp}

\begin{proof} By (\ref{A1}) and the choice of $\phi$, it is easy to see that $\phi,  \e^{-V}\phi\in \C_\aa.$  Since $L_{\aa,V}\phi$ is locally bounded,  we only need to verify the conclusion for $|x|$ large enough.

Using the facts that $2\<x, z\>=|x+z|^2-|x|^2-|z|^2$ for all $x$, $z\in\R^d$, and $b^{\alpha_0}- a^{\alpha_0}\le \alpha_0a^{\alpha_0-1}(b-a)$ for any $a$, $b\ge0$, we get that for  $|x|$ large enough,
\begin{align*}&\int_{\{|z|\le 1\}}\Big(\phi(x+z)-\phi(x)-\<\nabla \phi(x), z\>\Big)\,\frac{\d z}{|z|^{d+\alpha}} \\
&\le\alpha_0|x|^{\alpha_0-1}\int_{\{|z|\le 1\}}\Big(|x+z|-|x|-\frac{\<x, z\>}{|x|}\Big)\,\frac{\d z}{|z|^{d+\alpha}} \\
&=\frac{1}{2}\alpha_0|x|^{\alpha_0-2}\int_{\{|z|\le 1\}}\Big(2|x+z|\cdot |x|-2|x|^2-|x+z|^2+|x|^2+|z|^2\Big)\,\frac{\d z}{|z|^{d+\alpha}} \\
&=\frac{1}{2}\alpha_0|x|^{\alpha_0-2}\int_{\{|z|\le 1\}}\Big(|z|^2-(|x|-|x+z|)^2\Big)\,\frac{\d z}{|z|^{d+\alpha}} \\
&\le\frac{1}{2}\alpha_0|x|^{\alpha_0-2}\int_{\{|z|\le 1\}}\,\frac{\d z}{|z|^{d+\alpha-2}}\\
& \le 1.\end{align*} Let $c_1= \sup_{|z|\le 1} \phi(z)$. Then  $\phi(x)\le c_1+1+|x|^{\alpha_0}$ holds for all $x\in \R^d$. Combining this  with
 $\phi(x)=1+|x|^{\aa_0}$ for $|x|\ge 1$, and the triangle inequality $(a+b)^{\alpha_0}\le a^{\alpha_0}+ b^{\alpha_0}$ for  $a$, $b\ge0$, we obtain that for $|x|$ large enough
\begin{align*}&\int_{\{|z|> 1\}}\Big(\phi(x+z)-\phi(x)\Big)\,\frac{1}{|z|^{d+\alpha}}\,\d z\\
&\le\int_{\{|z|> 1\}}\Big(c_1+|x+z|^{\alpha_0}-|x|^{\alpha_0}\Big)\,\frac{1}{|z|^{d+\alpha}}\,\d z\\
&\le \int_{\{|z|> 1\}}\Big(c_1+|z|^{\alpha_0}\Big)\,\frac{\d z}{|z|^{d+\alpha}} \\
&=:c_2<\infty.
  \end{align*} Therefore, for   $|x|$ large enough,
\beq\label{AA1}  \int_{\R^d}\Big(\phi(x+z)-\phi(x)-\nabla \phi(x)\cdot z\I_{\{|z|\le 1\}}\Big)\,\frac{\d z}{|z|^{d+\alpha}} \le 1+c_2.\end{equation}

Next, since $|x+z|^{\aa_0}-|x|^{\aa_0}\le |z|^{\aa_0}$, and for large enough $|x|,$
\beg{equation*}\beg{split} 1_{\{|x+z|\le |x|\}}\Big(|x+z|^{\alpha_0}-|x|^{\alpha_0}\Big)
&\le 1_{\{|x+z|\le 1\}}\Big(|x+z|^{\alpha_0}-|x|^{\alpha_0}\Big),\\
\sup_{|z|\ge |x|} \e^{-V(z)}&\le \ff {1} {\Phi(0)(1+|x|)^{d+\aa}}, \end{split}\end{equation*}
there exists a constant $c_3>0$ such that for $|x|$ large enough,
\begin{align*}& \int_{\{|z|> 1\}}\Big(\phi(x+z)-\phi(x)\Big)\,\frac{  \e^{V(x)-V(x+z)}}{|z|^{d+\alpha}}\,\d z \\
& \le \e^{V(x)}\int_{\{|z|> 1\}}\Big(c_1+|x+z|^{\alpha_0}-|x|^{\alpha_0}\Big)\,\frac{  \e^{-V(x+z)}}{|z|^{d+\alpha}}\,\d z\\
&\le  \int_{\{|z|> 1, |x+z|\le |x|\}}\Big(c_1+|x+z|^{\alpha_0}-|x|^{\alpha_0}\Big)\,\frac{  \e^{V(x)-V(x+z)}}{|z|^{d+\alpha}}\,\d z\\
&\qquad
 +  \int_{\{|z|> 1, |x+z|> |x|\}}\Big(c_1+|z|^{\alpha_0}\Big)\frac{  \e^{V(x)-V(x+z)}}{|z|^{d+\alpha}}\,\d z\\
&\le  \int_{\{|z|> 1, |x+z|\le 1\}}\Big(c_1+|x+z|^{\alpha_0}-|x|^{\alpha_0}\Big)\,\frac{  \e^{V(x)-V(x+z)}}{|z|^{d+\alpha}}\,\d z\\
&\qquad
 +  \int_{\{|z|> 1, |x+z|> |x|\}}\Big(c_1+|z|^{\alpha_0}\Big)\frac{  \e^{V(x)-V(x+z)}}{|z|^{d+\alpha}}\,\d z\\
&\le \e^{V(x)}\bigg(\inf_{|z|\le 1}\e^{-V(z)}\bigg)\int_{\{|z|> 1, |x+z|\le 1\}}\Big(c_1+1-|x|^{\alpha_0}\Big)\,\frac{\d z}{|z|^{d+\alpha}}  \\
&\qquad+ \e^{V(x)}\bigg(\sup_{|z|\ge |x|}\e^{-V(z)}\bigg)\bigg[c_1\int_{\{|z|> 1, |x+z|> |x|\}}\frac{\d z}{|z|^{d+\alpha}}+\int_{\{|z|> 1, |x+z|> |x|\}}\frac{\d z}{|z|^{d+\alpha-\alpha_0}}\bigg] \\
&\le -\ff{\e^{V(x)}|x|^{\alpha_0}}2 \bigg(\inf_{|z|\le 1}\e^{-V(z)}\bigg)\int_{\{|z|> 1, |x+z|\le 1\}}  \,\frac{\d z}{|z|^{d+\alpha}}\\
&\qquad
+ \ff{\e^{V(x)}}{(1+|x|)^{d+\aa}\Phi(0)} \bigg[ c_1\int_{\{|z|> 1\}}\frac{\d z}{|z|^{d+\alpha}}+\int_{\{|z|> 1\}}\frac{\d z}{|z|^{d+\alpha-\alpha_0}} \bigg] \\
&\le-c_3\frac{\e^{V(x)}}{(1+|x|)^{d+\alpha}}|x|^{\alpha_0}.\end{align*}
On the other hand, using again the facts that $2\<x,
z\>=|x+z|^2-|x|^2-|z|^2$ for all $x$, $z\in\R^d$, and $b^{\alpha_0}-
a^{\alpha_0}\le \alpha_0a^{\alpha_0-1}(b-a)$ for any $a$, $b\ge0$,
we see that for $|x|$ large enough,
\begin{align*}& \int_{\{|z|\le 1\}}\Big(\phi(x+z)-\phi(x)-\<\nabla \phi(x), z\>\Big)\,\frac{ \e^{V(x)-V(x+z)}}{|z|^{d+\alpha}}\,\d z\\
&\le \alpha_0|x|^{\alpha_0-1}\int_{\{|z|\le 1\}}\Big(|x+z|-|x|-\frac{\<x, z\>}{|x|}\Big)\,\frac{  \e^{V(x)-V(x+z)}}{|z|^{d+\alpha}}\,\d z\\
&=\frac{1}{2}\alpha_0|x|^{\alpha_0-2} \int_{\{|z|\le 1\}}\Big(|z|^2-(|x|-|x+z|)^2\Big)\,\frac{  \e^{V(x)-V(x+z)}}{|z|^{d+\alpha}}\,\d z\\
&\le\frac{1}{2}\alpha_0\bigg(\sup_{|z|\le1}\e^{-V(x+z)}\bigg)|x|^{\alpha_0-2}\e^{V(x)}\int_{\{|z|\le 1\}}\,\frac{\d z}{|z|^{d+\alpha-2}} \\
&=\Bigg[\frac{1}{2}\alpha_0\bigg(\int_{\{|z|\le 1\}}\,\frac{\d z}{|z|^{d+\alpha-2}} \bigg)\bigg(\sup_{|z|\ge |x|-1}\e^{-V(z)}\bigg)|x|^{d+\alpha-2}\Bigg]\times \frac{\e^{V(x)}|x|^{\alpha_0}}{|x|^{d+\alpha}}\\
&\le\frac{c_3}{2}\frac{\e^{V(x)}}{(1+|x|)^{d+\alpha}}|x|^{\alpha_0}, \end{align*} also thanks to  \eqref{A1}. Therefore,
\beq\label{AA2}  \int\Big(\phi(x+z)-\phi(x)-\<\nabla \phi(x), z\>\I_{\{|z|\le 1\}}\Big)
\frac{\e^{V(x)-V(x+z)}}{|z|^{d+\alpha}}\,\d z  \le-\frac{c_3}{3}\frac{\e^{V(x)}}{(1+|x|)^{d+\alpha}}\phi(x)\end{equation} holds for large enough $|x|$.

Finally, according to \eqref{A1}, we find that for $|x|$ large enough
\begin{align*} &\int_{\{|z|\le 1\}}
\<\nn \phi(x), z\>\Big(\e^{V(x)-V(x+z)}-1\Big)\frac{\d z}{|z|^{d+\alpha}}\\
&=\bigg|\e^{V(x)}\int_{\{|z|\le 1\}} \<\nn\phi(x),z\>
\Big(\e^{-V(x+z)}-\e^{-V(x)}\Big)\frac{\d z}{|z|^{d+\alpha}}
 \bigg|\\
&\le \alpha_0 \bigg(\int_{\{|z|\le
1\}} \frac{\d z}{|z|^{d+\alpha-2}}\bigg)\e^{V(x)}|x|^{\alpha_0-1}\Big(\sup_{|z|\ge |x|-1} |\nn e^{-V}(z)|\Big)\\
& \le \frac{c_3}{6}\frac{\e^{V(x)}}{(1+|x|)^{d+\alpha}}\phi(x).
\end{align*}
Combining this with (\ref{AA1}) and (\ref{AA2}), and using the expression of $L_{\aa,V}$ in Proposition \ref{P2.2}, we conclude that
$$ L_{\aa, V}\phi(x)\le -\frac{c_3}{8}\frac{\e^{V(x)}}{(1+|x|)^{d+\alpha}}\phi(x)\le -\ff{c_3}8 \Phi(|x|) \phi(x)$$ holds for large enough $|x|$.
\end{proof}

\section{Proof of Theorem \ref{T1.1}}

In the spirit of \cite[Theorem 2.10]{CGWW}, to derive functional
inequalities using the Lyapunov condition confirmed in Proposition
\ref{P2.3}, we need only to verify  the corresponding local
inequality. So, we first present two lemmas concerning the local
super Poincar\'e inequality and the local Poincar\'e inequality.

\begin{lem} \label{L3.1}
There exists a constant $c>0$ such that for any $s$, $r>0$ and
any $f\in C_0^\infty(\R^d)$,
\beg{equation*}\beg{split} \int_{B(0,r)}f(x)^2 \e^{-V(x)}\d x \le &s\iint_{B(0,r)\times B(0,r)}\frac{(f(y)-f(x))^2}{|y-x|^{d+\alpha}}\,\d y \e^{-V(x)}\,\d x \\
 &+ \ff{cH(r)^{2+d/\aa}}{h(r)^{1+d/\aa}} \big(1+s^{-d/\aa}\big)\bigg(\int_{B(0,r)}|f|(x)\e^{-V(x)}\d x\bigg)^2.\end{split}\end{equation*}
  \end{lem}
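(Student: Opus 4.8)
The plan is to compare the three $\mu_V$-weighted quantities on $B(0,r)$ with their Lebesgue analogues and then prove an elementary local super Poincar\'e inequality for the pure truncated form. On $B(0,r)$ the definitions of $h$ and $H$ give $H(r)^{-1}\le\e^{-V(x)}\le h(r)^{-1}$; hence $\int_{B(0,r)}f^2\e^{-V}\,\d x\le h(r)^{-1}\int_{B(0,r)}f^2\,\d x$, and $\big(\int_{B(0,r)}|f|\,\d x\big)^2\le H(r)^2\big(\int_{B(0,r)}|f|\e^{-V}\,\d x\big)^2$. Moreover, since the kernel $|f(x)-f(y)|^2|x-y|^{-d-\aa}$ is symmetric, the energy term may be rewritten as $\frac12\iint_{B(0,r)^2}|f(x)-f(y)|^2|x-y|^{-d-\aa}(\e^{-V(x)}+\e^{-V(y)})\,\d x\,\d y\ge H(r)^{-1}\iint_{B(0,r)^2}|f(x)-f(y)|^2|x-y|^{-d-\aa}\,\d x\,\d y$. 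So it suffices to produce a dimensional constant $c_0$ with
\[
\int_{B(0,r)}f^2\,\d x\le s_0\iint_{B(0,r)^2}\ff{|f(x)-f(y)|^2}{|x-y|^{d+\aa}}\,\d x\,\d y+c_0\big(s_0^{-d/\aa}+r^{-d}\big)\Big(\int_{B(0,r)}|f|\,\d x\Big)^2,\qquad s_0,r>0,
\]
and then substitute $s=s_0H(r)/h(r)$, tracking the powers of $h$ and $H$.

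I would prove the Lebesgue estimate by hand, with no harmonic analysis. Fix $\rho\in(0,r]$ and split $B(0,r)$, up to null sets, into finitely many Borel cells $A_1,\dots,A_N$ of diameter $\le\rho$ and volume $\ge c_d\rho^d$ (a cube grid of mesh comparable to $\rho$, with the small cells near $\partial B(0,r)$ merged into neighbours, achieves this whenever $\rho\le r$). Since $|x-y|^{-d-\aa}\ge\rho^{-d-\aa}$ for $x,y$ in a common cell,
\[
\iint_{B(0,r)^2}\ff{|f(x)-f(y)|^2}{|x-y|^{d+\aa}}\,\d x\,\d y\ \ge\ \rho^{-d-\aa}\sum_{i=1}^N\iint_{A_i\times A_i}|f(x)-f(y)|^2\,\d x\,\d y .
\]
Using the identity $\iint_{A_i\times A_i}|f(x)-f(y)|^2\,\d x\,\d y=2|A_i|\int_{A_i}f^2\,\d x-2(\int_{A_i}f\,\d x)^2$, the volume lower bound, and $\sum_i(\int_{A_i}|f|\,\d x)^2\le(\int_{B(0,r)}|f|\,\d x)^2$, one gets $\int_{B(0,r)}f^2\,\d x\le\frac{\rho^{\aa}}{2c_d}\iint_{B(0,r)^2}\frac{|f(x)-f(y)|^2}{|x-y|^{d+\aa}}\,\d x\,\d y+\frac1{c_d\rho^d}(\int_{B(0,r)}|f|\,\d x)^2$. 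Taking $\rho^{\aa}\asymp s_0$ when the resulting $\rho$ is $\le r$ produces the term $c_0s_0^{-d/\aa}$; for the remaining (large) $s_0$ one applies the inequality at the coarsest scale $\rho\asymp r$ and uses monotonicity in $s_0$, producing the term $c_0r^{-d}$.

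Substituting this into the reduction of the first paragraph with $s_0=sh(r)/H(r)$ turns the $L^1$-coefficient into $h(r)^{-1}H(r)^2\big(c_0(sh(r)/H(r))^{-d/\aa}+c_0r^{-d}\big)=c_0H(r)^{2+d/\aa}h(r)^{-1-d/\aa}s^{-d/\aa}+c_0H(r)^2h(r)^{-1}r^{-d}$; since $H(r)\ge h(r)$ (and, in the range of $r$ for which the estimate is actually used to feed Proposition \ref{P2.3} into the argument of \cite{CGWW}, $r$ stays bounded away from $0$) the second summand is dominated by $c_0H(r)^{2+d/\aa}h(r)^{-1-d/\aa}$, giving the asserted $\beta$. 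I expect the only delicate points to be (i) building the partition so that every cell, including those meeting $\partial B(0,r)$, has volume comparable to $\rho^d$ — elementary but fiddly — and (ii) carrying the exact exponents of $h$ and $H$ through the change of variable $s_0\leftrightarrow s$; the analytic core, the one-cell variance identity together with $\sum_i(\int_{A_i}|f|)^2\le(\int_{B(0,r)}|f|)^2$, is completely explicit. An alternative to the partition step is to extend $f|_{B(0,r)}$ to $\R^d$ with fractional energy $\le C\big([f]_{H^{\aa/2}(B(0,r))}^2+r^{-\aa}\|f\|_{L^2(B(0,r))}^2\big)$ and invoke the standard super Poincar\'e inequality $\|g\|_2^2\le s\,\E_\aa^{(0)}(g,g)+Cs^{-d/\aa}\|g\|_1^2$ on $\R^d$, but that requires a scaling-explicit extension operator and is less self-contained.
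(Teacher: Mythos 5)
Your proof takes a genuinely different and more self-contained route than the paper's. The paper treats the Lebesgue-measure estimate $\int_{B(0,r)}f^2\,\d x\le s_0\iint_{B(0,r)^2}|f(x)-f(y)|^2|x-y|^{-d-\aa}\,\d x\,\d y + c_1(1+s_0^{-d/\aa})\bigl(\int_{B(0,r)}|f|\,\d x\bigr)^2$ as a black box: it quotes the dimension-$2d/\aa$ fractional Sobolev inequality on balls from \cite{CK} and then invokes the Sobolev-to-local-super-Poincar\'e machinery of \cite[Corollary 3.3.4]{WBook}; after that it performs exactly the same $h,H$-transfer to $\mu_V$ and the same reparametrisation $s_0\leftrightarrow s$ that you carry out in your first and last paragraphs. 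Your replacement of the two external citations by an explicit cell-partition argument at mesh $\rho$, the one-cell variance identity $\iint_{A_i\times A_i}(f(x)-f(y))^2\,\d x\,\d y = 2|A_i|\int_{A_i}f^2\,\d x-2\bigl(\int_{A_i}f\,\d x\bigr)^2$, and $\sum_i\bigl(\int_{A_i}|f|\,\d x\bigr)^2\le\bigl(\int_{B(0,r)}|f|\,\d x\bigr)^2$ is elementary and self-contained, and it keeps the dimensional constant explicit.

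You have also, correctly, spotted an imprecision in the stated constant: the Lebesgue-level inequality cannot hold with $c_1(1+s_0^{-d/\aa})$ uniformly for small $r$. Taking $f\equiv 1$ on $B(0,r)$ kills the energy term, and letting $s_0\to\infty$ would then force $1\le c_1|B(0,r)|$, which fails as $r\to 0$; equivalently, since $s_0$ carries dimension $(\mathrm{length})^{\aa}$, the coefficient of the $L^1$-squared term must carry dimension $(\mathrm{length})^{-d}$, so the $1$ is really $\asymp r^{-d}$ — precisely the term your partition argument produces. Your version $c_0(s_0^{-d/\aa}+r^{-d})$ is therefore the correct form, and the lemma as literally stated only holds once $r$ is bounded away from $0$ so that $r^{-d}$ can be absorbed into the constant. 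That is exactly the range used downstream: in the proof of Theorem \ref{T1.1}, the lemma is applied at $r=r_0$ and at $r=\Phi^{-1}(2c_0/s)\ge r_0$, so the discrepancy is harmless. In short, your proof is correct, more elementary than the paper's citation-based one, and sharper about the small-$r$ behaviour; the parenthetical caveat you added is a fair and accurate reading of where the constant $1$ actually comes from.
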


\begin{proof}
 Note that the Sobolev inequality of dimension $2d/\aa$ for fractional Laplacians
holds uniformly on balls, e.g.\ see \cite[Section 2]{CK}.
 Then, according to  \cite[Corollary 3.3.4]{WBook} (see also \cite[Theorem 4.5]{W00b}),
there exists a constant $c_1>0$ such that
$$
\int_{B(0,r)} f^2(x)\,\d x\le s\iint_{B(0,r)\times B(0,r)}
\frac{(f(y)-f(x))^2}{|y-x|^{d+\alpha}}\,\d y\,\d x+
c_1\big(1+s^{-d/\aa}\big)  \bigg(\int_{B(0,r)} |f(x)|\,\d
x\bigg)^2$$ holds for all $f\in C_0^\infty(\R^d)$ and all $s,r>0.$
Therefore, for any $r,s>0,$
\begin{align*} \int_{B(0,r)}f^2(x)\e^{-V(x)}\,\d x
 &\le \ff 1 {h(r)}\int_{B(0,r)}f^2(x)\,\d x\\
&\le \ff 1 {h(r)}\bigg\{s\iint_{B(0,r)\times B(0,r)} \frac{(f(y)-f(x))^2}{|y-x|^{d+\alpha}}\,\d y\,\d x\\
&\qquad\qquad+c_1\big(1+s^{-d/\aa}\big)
 \bigg(\int_{B(0,r)}|f(x)|\,\d x\bigg)^2\bigg\}\\
&\le \ff{sH(r)}{h(r)}\iint_{B(0,r)\times B(0,r)} \frac{(f(y)-f(x))^2}{|y-x|^{d+\alpha}}\,\d y\e^{-V(x)}\,\d x\\
&\quad+\ff{c_1(1+s^{-d/\aa})H^2(r)}{h(r)} \bigg(\int_{B(0,r)} |f(x)|\e^{-V(x)}\,\d x\bigg)^2. \end{align*}
This implies the  desired assertion   by replacing $s$ with $sh(r)H(r)^{-1}$.
\end{proof}

\beg{lem}\label{L3.2}   For any $r>0$ and $f\in C_0^\infty(\R^d)$,
\beq\label{LLP} \mu_V(f^21_{B(0,r)}) \le  \Psi_2(r) \E_{\aa,V} (f,f)
+\ff{\mu_V(f1_{B(0,r)})^2}{\mu_V(B(0,r))}.\end{equation} 
Consequently,
the weak Poincar\'e inequality $(\ref{WP})$ holds for
\beq\label{*N}\tt\bb(r):= \inf \Big\{  \Psi_2(R):\
\mu_V(B(0,R)^c)\le \ff r {1+r}\Big\}<\infty,\ \ r>0.\end{equation}
\end{lem}

\beg{proof} By the Cauchy-Schwarz inequality,
\beg{equation*} \beg{split} &\int_{B(0,r)}\bigg(f(x)-\frac{1}{\mu_V(B(0,r))}\int_{B(0,r)}f(x)\,\mu_V(\d x)\bigg)^2\mu_V(\d x)\\
&=\int_{B(0,r)}\bigg(\frac{1}{\mu_V(B(0,r))}\int_{B(0,r)}(f(x)-f(y))\,\mu_V(\d y)\bigg)^2\mu_V(\d x)\\
&\le \frac{1}{\mu_V(B(0,r))^2}\int_{B(0,r)}\bigg(\int_{B(0,r)}(f(x)-f(y))^2\frac{e^{V(y)}}{|y-x|^{d+\alpha}}\,\mu_V(\d y)\bigg)\\
&\qquad\qquad\qquad\qquad\qquad\times\bigg(\int_{B(0,r)}\frac{|y-x|^{d+\alpha}}{e^{V(y)}}\mu_V(\d y)\bigg)\,\mu_V(\d x)\\
&\le\Psi_2(r) \iint_{B(0,r)\times B(0,r)}
\frac{(f(x)-f(y))^2}{|x-y|^{d+\alpha}}\,\d y \mu_V(\d
x).\end{split}\end{equation*} So, the inequality (\ref{LLP}) holds,
which implies the desired weak Poincar\'e inequality according to
\cite[Theorem 3.1]{RW01} or \cite[Theorem 4.3.1]{WBook}.
 \end{proof}

Since $\mu_V(\R^d)<\infty$, most likely we have $\int_{\R^d}
\e^{-2V(y)}\d y<\infty$, so that $\Psi_2(R)\le c_1R^{d+\aa}$ holds
for some constant $c_1>0$ and all $R\ge 1.$ In this case  there
exists a constant $c>0$ such that $\tt\bb$ in (\ref{*N}) satisfies
$$\tt\bb(r)\le c+c\,\inf\Big\{R^{d+\aa}:\ \mu_V(B(0,R)^c)\le \ff r {1+r}\Big\}<\infty,\ \ r>0.$$
In many cases this $\tt\bb$ is however not sharp, for instance, in
the proofs of Corollaries \ref{C1.2} -\ref{C1.4} we will use $\Psi_1$
rather than $\Psi_2$ in
  Theorem \ref{T1.1}(3) to derive sharp estimates on $\tt\bb$.

\beg{proof}[Proof of Theorem \ref{T1.1}] 
First, according to Proposition \ref{P2.3}, we have
 $$
1_{B(0,r)^c}\le  \frac{1}{C_1\Phi(r)}\frac{-L_{\aa,V}\phi}{\phi}+\frac{C_2}{C_1\Phi(r)}\I_{B(0,r_0)},\ \ \  r\ge r_0.
$$  Then, for any $f\in C_0^\infty(\R^d)$,
\beq\label{AF}\mu_V(f^2\I_{B(0,r)^c})\le \frac{1}{C_1\Phi(r)}\mu_V\Big(f^2\frac{-L_{\aa,V}\phi}{\phi}\Big)+
 \frac{C_2}{C_1\Phi(r)}\mu_V(f^2\I_{B(0,r_0)}).\end{equation}
By Proposition \ref{P2.2} and the fact that
\beg{equation*}\beg{split} \Big(\ff{f^2(x)}{\phi(x)}- \ff{f^2(y)}{\phi(y)}\Big)(\phi(x)-\phi(y)) &=f^2(x)+f^2(y) -\Big(\ff{\phi(y)}{\phi(x)}f^2(x)+\ff{\phi(x)}{\phi(y)}f^2(y)\Big)\\
&\le f^2(x)+f^2(y) -2 |f(x)f(y)|\\
&\le (f(x)-f(y))^2,\end{split}\end{equation*} we obtain
\begin{equation}\label{proof}\mu_V\big(f^2\frac{-L_{\aa,V}\phi}{\phi}\big)\le
\E_{\aa,V}(f,f).\end{equation} Therefore, (\ref{AF}) implies
\beq\label{JJ1} \mu_V(f^2\I_{B(0,r)^c})\le
\frac{1}{C_1\Phi(r)}\E_{\aa,V}(f,f)+
 \frac{C_2}{C_1\Phi(r)}\mu_V(f^2\I_{B(0,r_0)}),\ \ r\ge r_0.\end{equation}
We are now to prove (1) and (2) in Theorem \ref{T1.1} respectively.

(1) According to \cite[Theorem 4.5 and Theorem 3.2]{W00b}, the local super Poincar\'e inequality in Lemma \ref{L3.1} implies that the associated Markov semigroup on $B(0,r)$
 has a uniformly bounded density, and hence the spectrum of the associated generator is discrete. Moreover, it is easy to see that the Dirichlet form on $B(0,r)$ is irreducible so that $0$ is a simple eigenvalue of the generator, we conclude that the spectral gap exists. Equivalently, for any $r>0$ there exists a constant $C(r)>0$ such that the local Poincar\'e inequality
 \beq\label{LP}\mu_V( f^21_{B(0,r)} ) \le C(r) \iint_{B(0,r)\times B(0,r)} \frac{(f(y)-f(x))^2}{|y-x|^{d+\alpha}}\,\d y \mu_V(\d x)\end{equation} holds for all $f\in C_0^\infty(\R^d)$ with
 $\mu_V(f1_{B(0,r)})=0.$ This, together with (\ref{JJ1}) implies the defective Poincar\'e inequality
 $$\mu_V(f^2) \le c_1 \E_{\aa, V}(f,f) + c_2 \mu_V(|f|)^2$$ for some constants $c_1,c_2>0$; and due to \cite[Theorem 3.1]{RW01}, \eqref{LP} also implies the weak Poincar\'e inequality of $\E_{\aa, V}$. According to \cite[Proposition 1.3]{RW01}, these two inequalities then imply the desired  Poincar\'e inequality.

(2) Now, assume that $\Phi(r)\uparrow\infty$ as $r\uparrow\infty.$  By Lemma \ref{L3.1}, there exists a constant $c>0$ such that
$$\mu_V(f^2\I_{B(0,r)})\le s \E_{\aa,V}(f,f)+\bb(r,s)\mu_V(|f|)^2,\quad s,r>0, f\in C_0^\infty(\R^d)$$ holds for
$$\bb(r,s):= \ff{cH(r)^{2+d/\aa}}{h(r)^{1+d/\aa}} \big(1+s^{-d/\aa}\big).$$ Combining this with (\ref{JJ1}) and (\ref{LP}) with $r=r_0$, we may find a constant $c_0>0$ such that, for any $r\ge r_0,$
\beg{equation*}\beg{split} \mu_V(f^2) &= \mu_V(f^21_{B(0,r)}) +\mu_V(f^21_{B(0,r)^c})\\
&\le \Big(s+\ff{c_0}{\Phi(r)}\Big) \E_{\aa, V} (f,f)+ \big(c_0
+\bb(r, s)\big)\mu_V(|f|)^2,\ \ s>0, f\in
C_0^\infty(\R^d).\end{split} \end{equation*} Letting
$s_0=c_0/\Phi(r_0)$  and taking $r=\Phi^{-1}(c_0/s)$, which is
larger than $r_0$ if $s\in (0,s_0),$ we obtain
$$\mu_V(f^2) \le 2 s \E_{\aa,V}(f,f) +\big\{c_0+ \bb(\Phi^{-1}(c_0/s), s)\big\} \mu_V(|f|)^2,\ \ s\in (0,s_0), f\in C_0^\infty(\R^d).$$ Replacing $s$ by $s/2$, we $$ \mu_V(f^2) \le  s \E_{\aa,V}(f,f) +\big\{c_0+ \bb(\Phi^{-1}(2c_0/s), s/2)\big\} \mu_V(|f|)^2,\ \ s\in (0,2s_0), f\in C_0^\infty(\R^d).$$
Noting that
$$\bb(\Phi^{-1}(2c_0/s), s/2)=\ff{c \big\{H\circ\Phi^{-1}(2c_0/s)\big\}^{2+d/\aa}}{\{h\circ \Phi^{-1}(2c_0/s)\big\}^{1+d/\aa}}\big(1+2^{d/\aa}s^{-d/\aa}\big),$$
this implies the  super Poincar\'e inequality with the desired $\bb$ for some constants $c_1,c_2>0$ and all $s\in (0, 2s_0).$ Then the inequality holds also
for $s\ge 2s_0$ with a possibly large constant $c_1$ by taking
$\bb(s)=\bb(2s_0)$ for $s\ge 2s_0$.

(3) Let $V_0 (x)=\ff {d+\aa} 2\log (1+|x|^2).$  Then Theorem \ref{T1.1}(1) implies that
the Poincar\'e inequality
\beq\label{PP1} \mu_{V_0} (f^2) \le C \mu_{V_0}(\GG(f,f)),\ \ f\in C_0^\infty(\R^d), \mu_{V_0}(f)=0\end{equation}  holds for some constant $C>0$, where
$$\GG(f,f)(x):= \int_{\R^d} \ff{|f(y)-f(x)|^2}{|x-y|^{d+\aa}}\, \d y,\ \ x\in\R^d.$$
For any $R>0$ and any $f\in C_0^\infty(\R^d)$, it follows from (\ref{PP1}) that
\begin{align*}&\int_{B(0,R)}\bigg(f(x)-\frac{1}{\mu_V(B(0,R))}\int_{B(0,R)}f(x)\,\mu_V(\d x)\bigg)^2\,\mu_V(\d x)\\
&=\inf_{a\in\R} \int_{B(0,R)}\bigg(f(x)-a\bigg)^2\,\e^{-V(x)}\d x\\
& \le \int_{B(0,R)}\bigg(f(x)-\mu_{V_0}(f)\bigg)^2\,\e^{-V(x)}\d x\\
&\le \bigg(\sup_{|x|\le R}\frac{(1+|x|^2)^{(d+\alpha)/2}}{e^{V(x)}}\bigg)\int_{B(0,R)}\bigg(f(x)-\mu_{V_0}(f)\bigg)^2\,\e^{-V_0(x)}\d x\\
&\le C\bigg(\sup_{|x|\le R}\frac{(1+|x|^2)^{(d+\alpha)/2}}{e^{V(x)}}\bigg)\int_{\R^d}\GG(f,f) (x)\e^{-V_0(x)}\d x  \\
&\le c\Psi_1(R)\int_{\R^d} \GG(f,f)(x)\e^{-V(x)}\d x.\end{align*} That
is,
$$\mu_V(f^21_{B(0,R)})\le c\Psi_1(R) \E_{\aa,V}(f, f)+\frac{\mu_V(f1_{B(0,R)})^2}{\mu_V(B(0,R))}.$$ Combining this with Lemma \ref{L3.2} we obtain
$$\mu_V(f^21_{B(0,R)}) \le  \big\{(c\Psi_1(R)\big)\land \Psi_2(R)\big\} \E_{\aa,V} (f,f) +\ff{\mu_V(f1_{B(0,R)})^2}{\mu_V(B(0,R))}.$$
The required weak Poincar\'e inequality then  follows from
\cite[Theorem 3.1]{RW01} or \cite[Theorem 4.3.1]{WBook}.
  \end{proof}

\begin{rem} The formula \eqref{proof} for diffusion operators is easily derived by using a chain rule,
e.g.\ see \cite[(2.2)]{CGWW}; and  the proof of it for symmetric
jump processes is based on the large derivation, see \cite[Lemma
2.12]{CGWW}. Our proof here is more straightforward.
\end{rem}
\section{Proofs of Corollaries}

In all these Corollaries, the sufficiency for the Poincar\'e/super Poincar\'e/weak Poincar\'{e} inequalities will be confirmed by Theorem \ref{T1.1}. To verify the necessary, we will make use of the   reference functions $g_n\in C^\infty(\R^d),  n\ge 1,$   such that $|\nn g_n|\le 2/n$ and
$$g_n(x)\beg{cases} =0, &\text{if}\ |x|\le n, \\
\in [0,1], &\text{if} \ |x|\in [n, 2n],\\
=1, &\text{if}\ |x|\ge 2n.\end{cases}$$
Then there exists a constant $c>0$ independent of $n$ such that
\beq\label{NN}\beg{split}  \GG(g_n,g_n)(x)&:=\int_{\R^d} \ff{|g_n(y)-g_n(x)|^2}{|x-y|^{d+\aa}}\, \d y \\
&\le \ff 4{n^2}\int_{|x-y|\le n} \ff 1 {|y-x|^{d+\aa-2}}\,\d y+\int_{|x-y|\ge n} \ff 1 {|x-y|^{d+\aa}}\,\d y\\
&\le \ff c {n^\aa},\ \ n\ge 1.\end{split}\end{equation}

\beg{proof}[Proof of Corollary \ref{C1.2}] Obviously, for any $\vv>0$, the function
$$V(x):= \ff {d+\vv} 2 \log (1+|x|^2),\ \ x\in\R^d$$ satisfies condition (\ref{A1}).

(1) If $\vv\ge \aa$, we have $\Phi(0)>0$, so that the Poincar\'e inequality follows from Theorem \ref{T1.1}(1). To disprove the Poincar\'e inequality
for $\vv\in (0,\aa)$, let us take the reference function $g_n$ introduced above. Obviously,
$$\mu_V(g_n)^2 \ge \ff{c_1}{n^\vv},\ \ \mu_V(g_n)^2\le \ff{c_2}{n^{2\vv}},\ \ n\ge 1$$ hold for some constants $c_1,c_2>0$. Combining this with (\ref{NN}) we see that
$$\lim_{n\to\infty} \ff{\E_{\aa,V}(g_n,g_n)}{\mu_V(g_n)^2-\mu_V(g_n)^2} \le \lim_{n\to\infty} \ff{cn^{-\alpha}}{c_1 n^{-\vv}- c_2 n^{-2\vv}}=0$$
provided $\vv\in (0,\aa).$ Thus, for any constant $C>0$, the Poincar\'e inequality (\ref{P}) does not hold.

(2) We first prove that if $\vv\le\aa$, then for any $\bb: (0,\infty)\to (0,\infty)$ the super Poincar\'e inequality (\ref{SP}) does not hold. Indeed, if this inequality holds, then
$$\ff{c_1}{n^\vv}\le \ff{cr}{n^\aa} + \ff{c_2\bb(r)}{n^{2\vv}},\ \ r>0, n\ge 1$$ holds for some constants $c,c_1,c_2>0$.
Since $\vv \in (0, \aa]$, we obtain
$$c_1\le \lim_{n\to\infty} \ff{cr}{n^{\aa-\vv}} +\lim_{n\to\infty} \ff{c_2\bb(r)}{n^\vv} \le cr,\ \ r>0.$$ Letting $r\to 0$ we conclude that $c_1\le 0$, which is however impossible.

Next, let $\vv>\aa$, we aim to confirm the super Poincar\'e inequality with the desired function $\bb(r)$.  It is easy to see that
$$h(r)=1,\ \ H(r)= (1+r^2)^{(d+\vv)/2}, \ \ r>0$$ and $\Phi(r)\ge   c_3r^{\vv-\aa}$ for $r$ large so that
$$\Phi^{-1}(c_2r^{-1})\le   c_4 r^{-1/(\vv-\aa)}\ \  \textrm{ for } r>0 \textrm{ small} . $$
Hence, the function $\bb$ given in Theorem \ref{T1.1} (2) satisfies
$$\bb(r)\le  c\Big(1+ r^{-\ff d\aa -\ff{(d+\vv)(2\aa+d)}{\aa(\vv-\aa)}}\Big),\ \ r>0$$ for some constant $c>0.$  The equivalence of the concrete super Poincar\'e inequality and the correspondinf bound of $\|P_t^{\aa,V}\|_{L^1(\mu_V)\to L^2(\mu_V)}$ then follows from
\cite[Theorem 4.5(2)]{W00b} (see also \cite[Theorem
3.3.15(2)]{WBook}).

(3) It is easy to see that $\Psi_1(R) ={\rm O} (R^{\aa-\vv})$ for
large $R$. Then the desired weak Poincar\'e inequality follows from
Theorem \ref{T1.1}(3). According to \cite[Corollary 2.4(2)]{RW01}
(see also \cite[Theorem 4.1.5(2)]{WBook}), we have the claimed
bound of $\|P_t^{\aa,V}-\mu_V\|_{L^\infty(\mu_V)\to L^2(\mu_V)}^2$.
On the other hand, for $g_n$ presented in the beginning of this
section, we have $\|g_n\|_\infty\le 1,$ $\mu_V(g_n^2)- \mu_V(g_n)^2
\ge c_1 n^{-\vv}$ for some constant $c_1>0$, and due to (\ref{NN})$, \E_{\aa,V}(g_n,g_n)\le {c} {n^{-\aa}}.$ Then (\ref{WP}) implies
that
$$\ff{c}{n^\aa}\tt\bb(r)\ge \ff{c_1}{n^\vv}  -r,\ \ r>0.$$ Taking $r_n= \ff {c_1} {2n^\vv}$  which goes to zero as $n\to\infty,$ we obtain
$$\liminf_{n\to\infty} r_n^{(\aa-\vv)/\vv} \tt\bb(r_n)>0.$$ Thus, (\ref{WP}) does not hold if $\lim_{r\to 0} r^{(\aa-\vv)/\vv}\tt\bb(r)=0.$
\end{proof}

\beg{proof}[Proof of Corollary \ref{C1.3}] Since when $\vv>0$ we
have  $\Phi(0)>0$, the Poincar\'e inequality holds due to Theorem
\ref{T1.1}(1). According to e.g.\ \cite[Corollary 1.3(1)]{W00b}, the
super Poincar\'e inequality with $\bb(r)=\exp\big(c(1+r^{-1})\big)$
for some constant $c>0$ is equivalent to the log-Sobolev inequality
(\ref{LS}) for some constant $C>0,$ we conclude that (1) and (2)
imply (3). So, it suffices to prove (1), (2) and (4).

(1) As in the proof of Corollary \ref{C1.2}(2), when $\vv\le 0$ the super Poincar\'e inequality does not hold. Let $\vv>0$. We have
$$\e^{-V(x)}= \ff 1 {(1+|x|^2)^{(d+\aa)/2} \log^\vv(\e+|x|^2)},\ \ x\in \R^d.$$
Then it is easy to see that
$$h(r)=1,\ \ H(r)= {(1+r^2)^{(d+\aa)/2}\log^\vv (\e+r^2)}$$ and $$\Phi(r) \ge {c_0} {\log^\vv (1+r^2)},\ \ r>0$$ holds for some constant $c_0>0.$ So, there exists a constant $c_3>0$ such that
$$\Phi^{-1}(c_2r^{-1})\le \exp[c_3 r^{-1/\vv}],\ \ r>0,$$
and hence, the function $\bb$ given in Theorem \ref{T1.1}(2) satisfies
$$\bb(r)\le \exp[c(1+r^{-1/\vv})],\ \ r>0$$ for some constant $c>0.$
When $\vv>1$, the equivalence of the concrete super Poincar\'e
inequality and the corresponding bound of
$\|P_t^{\aa,V}\|_{L^1(\mu_V)\to L^2(\mu_V)}$ then follows from
\cite[Theorem 4.5(1)]{W00b} (see also \cite[Theorem
3.3.15(1)]{WBook}).

(2) It is easy to see that
$$\mu_V(g_n^2)\ge \ff{c_1} {n^{\aa}\log^\vv (\e +n)},\ \ \mu_V(|g_n|)^2\le \ff{c_2}{n^{2\aa}\log^{2\vv}(\e +n)},\ \ n\ge 1$$ hold for some constants $c_1,c_2>0$. Combining this with (\ref{NN}) and (\ref{SP}), we obtain
$$\ff{c_1}{\log^\vv(\e+n)} \le cr+ \ff{c_2\bb(r)}{n^\aa\log^{2\vv}(\e+n)},\ \ r>0.$$ Taking $r_n=  \ff{c_1}{2c}\log^{-\vv}(\e+n),$ we derive
$$\bb(r_n)\ge \ff{c_1}2  n^\aa\log^\vv(\e+n),\ \ n\ge 1.$$ Therefore,
$$\liminf_{n\to \infty} r_n^{1/\vv}\log \bb(r_n) \ge \aa>0.$$  Thus, the proof of (2) is done.

(4) Let $\vv<0$. Then  there exist constants $C,c>0$ such that
$$\Psi_1(R) \le C \log^{-\vv} (\e +R),\ \ \ \mu_V(B(0,R)^c)\le c R^{-\aa} \log^\vv(\e+R),\ \ R>0.$$
So, the desired weak Poincar\'e inequality follows from Theorem
\ref{T1.1}(3), and the corresponding convergence rate of
$\|P_t^{\aa,V}-\mu_V\|_{L^\infty(\mu_V)\to L^2(\mu_V)}$ follows from
\cite[Corollary 2.4(1)]{RW01}. Finally, the sharpness of $\tt\bb$
can be easily verified using reference functions $g_n, n\ge 1$.
\end{proof}

\beg{proof}[Proof of Corollary \ref{C1.4}]
There exist constants
$C,c>0$ such that
$$\Psi_1(R) \le \frac{C R^\alpha }{\log^\varepsilon(e+R)},\ \ \
 \mu_V(B(0,R)^c)\le c\Big(\log(e+R)\Big)^{-(\varepsilon-1)},\ \ R>0.$$ So, the desired weak Poincar\'e inequality follows from Theorem \ref{T1.1}(3) and the corresponding convergence rate of $\|P_t^{\aa,V}-\mu_V\|_{L^\infty(\mu_V)\to L^2(\mu_V)}$ follows from \cite[Corollary 2.4(3)]{RW01}.
Similar to the part (4) in the proof of Corollary \ref{C1.3}, the
sharpness of $\tt\bb$ can be easily verified using reference
functions $g_n, n\ge 1$.\end{proof}

\beg{proof}[Proof of Corollary \ref{C1.5}] For  the  super
Poincar\'e inequality with desired $\bb$, we need to  prove for
small $r>0$, since     we may always take $\bb$ to be deceasing in
the super Poincar\'e inequality.

(1) Since $$\e^{V(x)}= \exp\big[\log^{1+\vv}(1+|x|^2)\big],$$ it is easy to see that
$h(r)=1,\  H(r)=\exp\big[\log^{1+\vv}(1+r^2)\big]$ and $$\Phi(r)=\ff{H(r)}{(1+r)^{d+\aa}}\ge \exp\Big[\ff 1 2  \log^{1+\vv}(1+r)\Big],\ \ r\ge r_0$$ holds for some constant $r_0>0$. So,
$$H\circ\Phi^{-1}(c_2r^{-1}) =\big\{\Phi\circ\Phi^{-1}(c_2r^{-1})\big\}\cdot\big\{1+\Phi^{-1}(c_2r^{-1})\big\}^{d+\aa}\le cr^{-1}\exp\big[c\log^{1/(1+\vv)} r^{-1}\big]$$ holds for some constant $c>0$ and small $r>0.$
Then (\ref{SP})  with the desired $\bb$ for small $r>0$ follows from Theorem \ref{T1.1}(2), and the corresponding bound of $\|P_t^{\aa,V}\|_{L^1(\mu_V)\to L^\infty(\mu_V)}$ then follows from e.g.\ \cite[Theorem 4.4]{W00b}.

(2) Since $\e^{V(x)}= \exp[(1+|x|^2)^\vv]$, it is easy to see that
$h(r)=1,\  H(r)= \exp\big[ (1+r^2)^\vv\big]$ and  $$\Phi(r)= \ff{\exp [  (1+r^2)^\vv ]}{1+r^{d+\aa}},\ \ r\ge r_0$$ holds for some constant $r_0>0$.
Then there exists a constant $c>0$ such that for small enough $r>0$,
\beg{equation*}\beg{split} H\circ\Phi^{-1}(c_2 r^{-1}) &=\Big\{\Phi\circ \Phi^{-1}(c_2 r^{-1})\Big\} \cdot \Big\{1+\Phi^{-1} (c_2r^{-1}) \Big\}^{d+\aa} \\
&=  c_2  r^{-1} \Big(\Phi^{-1} (c_2r^{-1})^{d+\aa}\Big)\\
&\le c r^{-1}  \log^{(d+\aa)/(2\vv)} (1+
r^{-1}).\end{split}\end{equation*} Therefore, the super Poincar\'e
inequality with the desired $\bb(r)$ for small enough $r>0$ follows
from Theorem \ref{T1.1}(2), and the corresponding bound of
$\|P_t^{\aa,V}\|_{L^1(\mu_V)\to L^\infty(\mu_V)}$ then follows from
\cite[Theorem 4.4]{W00b}.
\end{proof}

\section{Super Poincar\'e inequalities implied by (\ref{EF})}

This section aims to establish the super Poincar\'e inequality using condition (\ref{EF}), so that the assertion in \cite{MRS} for the Poincar\'e inequality is strengthened.  As already indicated in Section 1 that the resulting super Poincar\'e inequality is normally worse than that presented in Theorem \ref{T1.1}.

For fixed $V\in C^2(\R^d)$ such that $\mu_V$ is a probability measure, let $h, H$   be as in Theorem \ref{T1.1}, and let
$$ W_\dd(r)= \inf_{|x|\ge r}\bigg(\dd |\nabla V(x)|^2  -\Delta V(x)\bigg),\ \ r>0. $$

\begin{thm}\label{TA} Let $V\ge 0$.
If there exists a constant $\dd\in(0,1)$ such that
$$
\lim_{|x|\to\infty} \big\{ \dd|\nabla V|^2 -\Delta V\big\}=\infty,$$
Then there exist constants $c_1,c_2>0$ such that the super Poincar\'{e} inequality $(\ref{SP})$ holds for
$$ \beta(r)=c_1\bigg(1+r^{-d/\alpha}H^{2+d/2}\Big(W_\dd^{-1}
\big(c_2r^{-2/\alpha}\big)\Big)h^{-(1+d/2)}\Big(W_\dd^{-1}
\big(c_2r^{-2/\alpha}\big)\Big)\bigg),\ \ r>0.
$$ In particular, if $V(x)= (1+|x|^2)^\vv$ for $\vv>\ff 1 2$, then there exists a constant $c>0$ such that the super Poincar\'e inequality holds for
$$\bb(r) = \exp\Big(c(1+r^{-2\vv/(\alpha(2\vv-1))})\Big),\ \ r>0.$$
 \end{thm}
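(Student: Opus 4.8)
The plan is to connect the stable-like Dirichlet form $\E_{\aa,V}$ to a diffusion-type Dirichlet form via subordination, so that condition (\ref{EF}) — which is a Bakry--Émery / Lyapunov type condition tailored to the generator $\DD - \nn V\cdot\nn$ — can be exploited, following the strategy of \cite{MRS}. Concretely, the $\aa$-stable subordinator relates $-(-\DD)^{\aa/2}$ to $\DD$, and more generally one expects a comparison of the Dirichlet form $\E_{\aa,V}$ with the ``subordinated'' quadratic form built from $\mu_V(|\nn f|^2)$. The first step is therefore to record (or cite, e.g.\ from \cite{W07, SW11} or \cite[Chapter 12.3]{SSV}) the inequality
$$
\iint_{\R^d\times\R^d}\ff{|f(x)-f(y)|^2}{|x-y|^{d+\aa}}\,\d y\,\mu_V(\d x) \ \ge\ c\,\inf_{t>0}\Big\{ t^{-\aa/2}\mu_V(f^2) + \dots\Big\},
$$
or more usefully the statement that a super Poincaré inequality for $(\mu_V,|\nn\cdot|^2)$ with rate $\beta_0$ transfers, under subordination by the $\aa/2$-stable subordinator, to a super Poincaré inequality for $\E_{\aa,V}$ with a rate obtained from $\beta_0$ by the standard ``$r\mapsto r^{2/\aa}$''-type substitution. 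This is exactly why the exponents $W_\dd^{-1}(c_2 r^{-2/\aa})$ appear in the claimed $\beta$.

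**Next I would** establish the super Poincaré inequality for the diffusion form $\mu_V(|\nn f|^2)$ itself. Here condition (\ref{EF}), namely $\dd|\nn V|^2 - \DD V \to\infty$ for some $\dd\in(0,1)$, is precisely the Lyapunov condition: taking the test function $W = \e^{\dd V/2}$ (or a suitable smooth modification), one computes
$$
\frac{\L W}{W} \ \le\ -\big(\tfrac{\dd}{2}-\tfrac{\dd^2}{4}\big)|\nn V|^2 + \tfrac{\dd}{2}\DD V \ \le\ -\tfrac{\dd}{4}\big(\dd|\nn V|^2 - \DD V\big) + C\,\mathbf 1_{B(0,r_0)}
$$
(using $\dd<1$ so that the coefficient $\tfrac{\dd}{2}-\tfrac{\dd^2}{4}>\tfrac{\dd}{4}$, after absorbing cross terms), where $\L = \DD - \nn V\cdot\nn$. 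Combined with the local super Poincaré inequality on balls (the diffusion analogue of Lemma \ref{L3.1}, using the classical Sobolev inequality of dimension $d$ on $B(0,r)$) and the machinery of \cite[Theorem 2.10]{CGWW} exactly as in the proof of Theorem \ref{T1.1}(2), this yields a super Poincaré inequality for $(\mu_V, |\nn\cdot|^2)$ with rate controlled by $H$, $h$ and $W_\dd^{-1}$; the local constant and the passage from $B(0,r)$ to $\R^d$ are handled verbatim as in Section 3, with $\Phi$ replaced by $W_\dd$.

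**Then** the transfer step: a super Poincaré inequality $\mu_V(f^2)\le s\,\mu_V(|\nn f|^2) + \beta_0(s)\mu_V(|f|)^2$ implies, after subordination by the $\aa/2$-stable subordinator, $\mu_V(f^2)\le r\,\E_{\aa,V}(f,f) + \beta(r)\mu_V(|f|)^2$ with $\beta(r)\asymp \beta_0(c\,r^{2/\aa})$ up to the polynomial prefactor $1+r^{-d/\aa}$ — this is the content of, e.g., \cite[Theorem 3.1]{W07} or the abstract Bochner-type argument in \cite[Chapter 12.3]{SSV}. Feeding in $\beta_0(s)\asymp 1 + s^{-d/2}H^{2+d/2}(W_\dd^{-1}(c/s))\,h^{-(1+d/2)}(W_\dd^{-1}(c/s))$ from the previous step and substituting $s = c r^{2/\aa}$ gives precisely the stated $\beta(r)$. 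Finally, for the special case $V(x)=(1+|x|^2)^\vv$ with $\vv>\tfrac12$, one has $\dd|\nn V|^2 - \DD V \asymp |x|^{2(2\vv-1)}$, so $W_\dd(\rho)\asymp \rho^{2(2\vv-1)}$ and $W_\dd^{-1}(u)\asymp u^{1/(2(2\vv-1))}$; since $H(\rho)=\exp[(1+\rho^2)^\vv]$ grows exponentially while $h=1$, the dominant term in $\beta$ is $\exp\big[c\,(W_\dd^{-1}(c r^{-2/\aa}))^{2\vv}\big] = \exp\big[c\, r^{-2\vv/(\aa(2\vv-1))}\big]$ (the polynomial factors being absorbed into the constant $c$), which is the claimed bound.

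**The main obstacle** I anticipate is making the subordination transfer rigorous in this non-reversible / measure-weighted setting: the clean Bochner subordination identity applies to self-adjoint semigroups, and while $\E_{\aa,V}$ is symmetric on $L^2(\mu_V)$, one must verify that $(-\DD)^{\aa/2}$-type form really dominates (up to a universal constant) the subordinated form of the diffusion $\L$ — equivalently that $\E_{\aa,V}(f,f) \ge c\int_0^\infty t^{-1-\aa/2}\big(\mu_V(f^2) - \mu_V(f P_t^{\L}f)\big)\,\d t$ or an analogue. The paper's own remark that ``it is impossible in many cases to identify a Lévy type jump process as subordination of a diffusion'' signals that one cannot appeal to an exact subordination; instead one needs the one-sided comparison of Dirichlet forms, which for the flat kernel $|x-y|^{-d-\aa}$ against the Gaussian heat kernel is classical, and then the weight $\e^{-V}$ is pointwise controlled by $h,H$ on the relevant region exactly as in Lemma \ref{L3.1}. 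Assembling these comparisons with the correct tracking of the $h,H$ factors is the technical heart; everything else is a repetition of the Lyapunov argument already carried out in Sections 2--3.
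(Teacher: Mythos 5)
Your overall strategy matches the paper's exactly: establish a super Poincar\'e inequality for the local Dirichlet form $\E_V(f,f)=\mu_V(|\nn f|^2)$ via a Lyapunov condition and \cite[Theorem 2.10]{CGWW}, then transfer to the subordinated form $\int f(-L_V)^{\aa/2}f\,\d\mu_V$ by the $r\mapsto r^{2/\aa}$ substitution of \cite{W07, SW11}, and finally use the one-sided comparison $\int f(-L_V)^{\aa/2}f\,\d\mu_V\le C\,\E_{\aa,V}(f,f)$ from \cite[Section 3]{MRS} (see also \cite[Lemmas 3.2, 3.3]{RS}) to pass to $\E_{\aa,V}$. The paper carries out precisely these three steps, so you have the right scaffolding, and your bookkeeping of the $r^{-d/\aa}$ prefactor and the substitution $s\mapsto c\,r^{2/\aa}$ is accurate.

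However, your choice of Lyapunov function $W=\e^{\dd V/2}$ is wrong and the chain of inequalities you display does not close for all $\dd\in(0,1)$. Direct computation gives, for $W=\e^{cV}$,
$$\frac{L_V W}{W}=-c\big[(1-c)|\nn V|^2-\DD V\big],$$
so your choice $c=\dd/2$ yields $-\tfrac{\dd}{2}\big[(1-\tfrac{\dd}{2})|\nn V|^2-\DD V\big]$, whose coefficient on $|\nn V|^2$ is $1-\dd/2$, not $\dd$. Your claimed bound $\frac{L_V W}{W}\le -\tfrac{\dd}{4}(\dd|\nn V|^2-\DD V)+C\mathbf 1_{B(0,r_0)}$ is equivalent to $2(1-\dd)|\nn V|^2-\DD V\ge -C'$ for large $|x|$, which follows from the hypothesis $\dd|\nn V|^2-\DD V\to\infty$ only when $2(1-\dd)\ge\dd$, i.e. $\dd\le 2/3$; for $\dd\in(2/3,1)$ the inequality may fail. (There are also no ``cross terms'' in this computation, so that remark is a red herring.) The correct choice, and the one the paper uses, is $\varphi=\e^{(1-\dd)V}$, which gives exactly $\frac{L_V\varphi}{\varphi}=-(1-\dd)\big(\dd|\nn V|^2-\DD V\big)$, i.e.\ $-(1-\dd)W_\dd(|x|)$ for $|x|$ large, matching the function $W_\dd$ that appears in the theorem's conclusion; the hypothesis $V\ge 0$ ensures $\varphi\ge 1$ as required by the CGWW framework. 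Apart from this substitution, your argument goes through.
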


\begin{proof} We only prove the first assertion, since the second one is a simple consequence. Let
$$
L_Vf=\Delta f-\<\nabla V, \nabla f\>,\ \ f\in C^2(\R^d).
$$ Then
 $$
\E_V(f,g):=\int_{\R^d}\<\nabla f, \nn g\>
 \,\d\mu_V=-\int_{\R^d}f L_V g\, \d\mu_V,\ \ f,g\in C_0^\infty(\R^d).
$$Hence, the Friedrichs extension $(L_V,\D(L_V))$ of $(L_V, C_0^\infty(\R^d))$  in $L^2(\mu_V)$ is a negatively definite self-adjoint operator. Let $(-L_V)^{\aa/2}$ be the associated fractional operator.
Let   $\varphi=\e^{(1-\dd) V}$. We have
 $$
\frac{L_V \varphi}{\varphi}=-(1-\dd)\Big( \dd |\nabla V|^2
-\Delta V\Big).
$$
Then, by the assumption on $V$ and \cite[Theorem 2.10]{CGWW}, there exist constants $c_3,c_4>0$ such that
the super Poincar\'{e} inequality
$$
\mu_V(f^2)\le r \E_V(f,f)+\beta_V(r)\mu(|f|)^2,\quad r>0, f\in C_0^\infty(\R^d), \mu(f)=0 $$  holds for
$$
\beta_V(r):=c_3\bigg(1+r^{-d/2}H^{2+d/2}\Big(W^{-1}
\big(c_4r^{-1}\big)\Big)h^{-(1+d/2)}\Big(W^{-1}
\big(c_4r^{-1}\big)\Big)\bigg),\ \ r>0.
$$
According to \cite[Corollary 2.1]{W07} or the proof
of \cite[Proposition 9]{SW11}, this implies
\beq\label{W*} \mu_V(f^2)\le r\int_{\R^d} f(-L_V)^{\aa/2}f\, \d\mu_V   +\frac{8}{\alpha}\beta_V\!\Big((r/{4})^{2/\alpha}\!\Big)\mu_V(|f|)^2,\ \ r>0\end{equation}    for all $f\in C_0^\infty(\R^d)$ with $ \mu_V(f)=0.$
 A close inspection of the arguments in \cite[Section 3]{MRS}
(see \cite[Lemma 3.2 and Lemma 3.3]{RS} for details)
yields that there is a  constant $C>0$ such that for all $f\in
C_0^\infty(\R^d)$ with $\mu_V(f)=0,$
$$
 \int_{\R^d} f (-L_V)^{\alpha/2}f \,\d\mu_V \le C \iint_{\R^d\times\R^d}
\frac{(f(y)-f(x))^2}{|y-x|^{d+\alpha}}\,\d y\,\mu_V(\d x)=C \E_{\aa, V}(f,f).
$$Combining this with  (\ref{W*}), we obtain
$$
\mu_V(f^2)\le r \E_{\aa,V}(f,f)
+\frac{8}{\alpha}\beta_V\!\Big(\Big(\frac{r}{4C}\Big)^{2/\alpha}\!\Big)
\mu_V(|f|)^2,\quad r>0, f\in C_0^\infty(\R^d), \mu_V(f)=0.
$$
  Then the
desired assertion follows immediately.
\end{proof}

Similarly, combining the proof above with \cite[Theorem 3.1]{RW01}
and \cite[Corollary 2.2]{W07} (or the proof of \cite[Proposition
9]{SW11}), we have the following result for weak Poincar\'{e}
inequalities for stable-like Dirichlet forms, which is normally less
sharp than that given in Theorem \ref{T1.1}.

\begin{thm}
For any $V\in C^2(\R^d)$ such that $\mu_V$ is a probability measure, there exist constants $c_1,c_2>0$ such that the
weak Poincar\'{e} inequality \eqref{WP} holds for
$$
\widetilde{\beta}(r)=c_1\,U({c_2r^{\alpha/2}})^2\exp\Big(2\delta_{U({c_2r^{\alpha/2}})}(V)\Big),\ \ r>0,
$$ where
$$U(r)=\inf\bigg\{s>0:\int_{|x|>s}e^{-V(x)}\,dx\le r/(1+r)\bigg\}\,\,
\textrm{ and }\,\, \delta_{r}(V)=\sup_{|x|\le r}V(x).
$$
\end{thm}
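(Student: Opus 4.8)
The plan is to run the same three-step scheme as in the proof of Theorem~\ref{TA}, with the super Poincar\'e inequality replaced throughout by the weak Poincar\'e inequality. Let $L_Vf=\DD f-\<\nn V,\nn f\>$ and let $(L_V,\D(L_V))$ be the Friedrichs extension on $L^2(\mu_V)$ of $(L_V,C_0^\infty(\R^d))$, with associated Dirichlet form $\E_V(f,g)=\int_{\R^d}\<\nn f,\nn g\>\,\d\mu_V$ and fractional operator $(-L_V)^{\aa/2}$. The three ingredients are: (i) a quantitative weak Poincar\'e inequality for the diffusion form $\E_V$; (ii) the transfer, by $\aa$-subordination, of a weak Poincar\'e inequality from $\E_V$ to $(-L_V)^{\aa/2}$; and (iii) the pointwise comparison $\int_{\R^d}f(-L_V)^{\aa/2}f\,\d\mu_V\le C\,\E_{\aa,V}(f,f)$ for $f\in C_0^\infty(\R^d)$ with $\mu_V(f)=0$, which was already established inside the proof of Theorem~\ref{TA}.

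For (i), fix $R>0$. Comparing $\mu_V$ restricted to $B(0,R)$ with the Lebesgue measure (on which $\e^{-V}$ oscillates between $H(R)^{-1}$ and $h(R)^{-1}$) and invoking the classical Poincar\'e inequality on that ball, whose constant is of order $R^2$, one gets the local weighted Poincar\'e inequality
\[
\mu_V\big(f^21_{B(0,R)}\big)\le cR^2\,\ff{H(R)}{h(R)}\,\mu_V\big(|\nn f|^21_{B(0,R)}\big)+\ff{\mu_V(f1_{B(0,R)})^2}{\mu_V(B(0,R))},\qquad f\in C_0^\infty(\R^d);
\]
since $h(R)^{-1}$ and $H(R)$ are dominated by $\exp(\dd_R(V))$, the coefficient here is at most $cR^2\exp(2\dd_R(V))$. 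Because $\mu_V$ is a probability measure, $U(s)<\8$ for every $s>0$, and because $V$ is continuous, $\dd_{U(s)}(V)<\8$; moreover $\mu_V(B(0,R)^c)\le r/(1+r)$ as soon as $R\ge U(c_0r)$. Feeding the family of local inequalities above, together with the trivial bound $\mu_V(f^21_{B(0,R)^c})\le\|f\|_\8^2\,\mu_V(B(0,R)^c)$, into \cite[Theorem 3.1]{RW01} (see also \cite[Theorem 4.3.1]{WBook}) yields the weak Poincar\'e inequality
\[
\mu_V(f^2)\le\bb_V(r)\,\E_V(f,f)+r\|f\|_\8^2,\qquad r>0,\ f\in C_0^\infty(\R^d),\ \mu_V(f)=0,
\]
with $\bb_V(r)=c_1\,U(c_2r)^2\exp\!\big(2\dd_{U(c_2r)}(V)\big)$.

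For (ii) and (iii), arguing exactly as in the proof of Theorem~\ref{TA}: \cite[Corollary 2.2]{W07} (or the argument behind \cite[Proposition 9]{SW11}) transfers the weak Poincar\'e inequality just obtained for $\E_V$ to a weak Poincar\'e inequality for the $\aa$-subordinated form $f\mapsto\int_{\R^d}f(-L_V)^{\aa/2}f\,\d\mu_V$, the rate being obtained from $\bb_V$ by replacing its argument $r$ with a constant multiple of $r^{\aa/2}$; this is exactly how the composition $U(c_2r^{\aa/2})$ arises. Combining the resulting inequality with the comparison $\int_{\R^d}f(-L_V)^{\aa/2}f\,\d\mu_V\le C\,\E_{\aa,V}(f,f)$ from \cite[Section 3]{MRS} (see \cite[Lemma 3.2, Lemma 3.3]{RS}), valid for $f\in C_0^\infty(\R^d)$ with $\mu_V(f)=0$, gives the weak Poincar\'e inequality $(\ref{WP})$ for $\E_{\aa,V}$ with the asserted $\widetilde{\bb}$.

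I expect the main obstacle to be the bookkeeping in step (ii): one must verify that \cite[Corollary 2.2]{W07} applies with only the qualitative information about $\bb_V$ available here and produces precisely the exponent $\aa/2$ inside $U$ (with the two constants chosen uniformly in $r$), and that the comparison inequality of \cite{MRS}/\cite{RS} goes through for an arbitrary $V\in C^2(\R^d)$ with $\mu_V$ a probability measure, i.e.\ without invoking condition $(\ref{EF})$. Once these points are settled, steps (i) and (iii) are routine, and assembling the three pieces yields the claim.
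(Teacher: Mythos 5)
Your proposal reconstructs exactly the argument the paper intends: the authors give only a one-line sketch (``combining the proof above with \cite[Theorem 3.1]{RW01} and \cite[Corollary 2.2]{W07}''), and your three-step scheme---a weak Poincar\'e inequality for the diffusion form $\E_V$ obtained from local Poincar\'e inequalities on balls and the tail decay of $\mu_V$, subordination transfer of that inequality to $(-L_V)^{\aa/2}$, and then the domination $\int f(-L_V)^{\aa/2}f\,\d\mu_V\le C\,\E_{\aa,V}(f,f)$ borrowed from the proof of the preceding theorem---is precisely that route. The two subtleties you flag at the end (whether the MRS/RS comparison goes through without condition \eqref{EF}, and whether the local constant $R^2H(R)/h(R)$ is really bounded by $R^2\e^{2\dd_R(V)}$ without normalizing $V\ge 0$ as is explicitly assumed in Theorem \ref{TA}) are genuine, but they are imprecisions already present in the paper's bare statement rather than defects of your reconstruction.
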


\paragraph{Acknowledgements.} The authors are indebted to the referee and an associate
editor for their suggestions. The authors also would like to thank
Dr.\ Xin Chen and Professors Ren\'{e} L. Schilling and Renming Song
for helpful comments on earlier versions of the paper.

\end{document}